\newcommand{\de}{\partial}
\newcommand{\z}{{\bf z}}
\newcommand{\R}{\mathbb R}
\newcommand{\Div}{\hbox{div\,}}
\theoremstyle{plain}
\newtheorem{theorem}{Theorem}[section]
\newtheorem{proposition}[theorem]{Proposition}
\newtheorem{lemma}[theorem]{Lemma}
\theoremstyle{definition}
\newtheorem{defin}[theorem]{Definition}
\newtheorem{remark}[theorem]{Remark}
\theoremstyle{remark}
\numberwithin{equation}{section}
\newcommand{\sg}{\hbox{\, sign\,}}
\author[F. Della Pietra]{Francesco Della Pietra}
\author[F. Oliva]{Francescantonio Oliva}
\author[S. Segura]{Sergio Segura de Le\'on}
\address{Francesco Della Pietra - Dipartimento di Matematica e Applicazioni, Universit\`a di Napoli Federico II, Via Cintia, Monte S. Angelo, 80126 Napoli, Italy}
\email{f.dellapietra@unina.it}
\address{Francescantonio Oliva - Dipartimento di Matematica e Applicazioni, Universit\`a di Napoli Federico II, Via Cintia, Monte S. Angelo, 80126 Napoli, Italy}
\email{francescantonio.oliva@unina.it}
\address{Sergio Segura de Le\'on - Departament d'An\`alisi Matem\`atica, Universitat de Val\`encia, Dr. Moliner 50,
46100 Burjassot, Val\`encia, Spain}
\email{sergio.segura@uv.es}
\keywords{Robin boundary conditions, Asymptotic behavior, $1$-Laplacian} \subjclass[2020]{35J92, 35J25, 35J70}
\begin{document}

\title{Behaviour of solutions to $p$-Laplacian with Robin boundary conditions as $p$ goes to $1$}

\maketitle

\begin{abstract}
We study the asymptotic behaviour, as $p\to 1^{+}$, of the solutions of the following inhomogeneous Robin boundary value problem:
\begin{equation}
\label{pbabstract}
\tag{P}
	\begin{cases}
		\displaystyle -\Delta_p u_p = f & \text{ in }\Omega,\\
		\displaystyle |\nabla u_p|^{p-2}\nabla u_p\cdot \nu +\lambda |u_p|^{p-2}u_p = g& \text{ on } \partial\Omega,
	\end{cases}
\end{equation}
where $\Omega$ is a bounded domain in $\mathbb R^{N}$ with sufficiently smooth boundary, 
$\nu$ is its unit outward normal vector and $\Delta_p v$ is the $p$-Laplacian operator with $p>1$. The data $f\in L^{N,\infty}(\Omega)$ (which denotes the Marcinkiewicz space) and $\lambda,g$ are bounded functions defined on $\partial\Omega$ with $\lambda\ge0$. 
We find the threshold below which the family of $p$--solutions goes to 0 and above which this family blows up. As a second interest we deal with the $1$-Laplacian problem  formally arising by taking $p\to 1^+$ in \eqref{pbabstract}.
\end{abstract}

\begin{center}
\begin{minipage}{.7\textwidth}
\tableofcontents
\end{minipage}
\end{center}


\section{Introduction}
The aim of this paper is twofold. We first deal with the asymptotic behaviour of solutions to inhomogeneous Robin boundary value problems with $p$-Laplacian as principal operator and then we analyse existence of solution for the limit problem as $p\to 1^+$. To be more precise, let $\Omega$ be an open bounded subset of $\mathbb{R}^N$ ($N\ge 2$) with smooth boundary and let $\nu$ denote its unit outward normal vector. We consider problems
\begin{equation}
	\label{pbintro}
	\begin{cases}
		\displaystyle -\Delta_p u_p = f & \text{ in }\Omega,\\
		\displaystyle |\nabla u_p|^{p-2}\nabla u_p\cdot \nu +\lambda |u_p|^{p-2}u_p = g& \text{ on } \partial\Omega,
	\end{cases}
\end{equation}
where  $\Delta_p v=\textrm{div}(|\nabla v|^{p-2}\nabla v)$ is the $p$-Laplacian operator with $p>1$, $f$ belongs to the Marcinkiewicz space $L^{N,\infty}(\Omega)$ and $\lambda,g$ are bounded functions defined on $\partial\Omega$ with $\lambda\ge0$ not identically null. In this paper, we will study the behaviour of solutions $u_p$ as $p\to 1^+$ and, when this family converges to an almost everywhere finite function $u$, we will check that $u$ is a solution to the limit problem.
\medskip

Let us observe that problem \eqref{pbintro} formally turns into a Dirichlet problem once that $\lambda=\infty$, or into a Neumann problem if $\lambda\equiv 0$. In these extremal cases, the study of the asymptotic behaviour with respect to $p\to 1^+$ in problems driven by the $p$--Laplacian is nowadays classical and widely studied. 

\subsection{Asymptotic behaviour}
Without the purpose of being exhaustive, we present some of the results which mostly motivated our work.
\\
The Dirichlet case presents a huge literature; in \cite{ct,K} the authors observe that solutions to \eqref{pbintro}, obtained as a minimum of a suitable functional, converge to a minimum of the functional written for $p=1$. Since $W^{1,1}(\Omega)$ is not reflexive, the limit is only expected to belong to $BV(\Omega)$. It is shown in \cite{K} that, when $f\equiv 1$, the family $u_p$ goes to 0 or to $\infty$, depending on the domain. 
This degeneration/blow up phenomenon was extended in \cite{ct}. It is shown that if $\|f\|_{N,\infty}<\tilde{\mathcal{S}}_1$ then $u_p\to 0$ almost everywhere in $\Omega$ as $p\to 1^+$ where $\tilde{\mathcal{S}}_1$ is the best constant in the Sobolev embedding from $W^{1,1}(\Omega)$ into the Lorentz space $L^{\frac{N}{N-1},1}(\Omega)$ (see \cite{alvino}). In the critical case $\|f\|_{N,\infty} = \tilde{\mathcal{S}}_1$ the solutions $u_p$ converge almost everywhere to a function $u$ as $p\to 1^+$ which is, in general, not null. Finally, if $\|f\|_{N,\infty}> \tilde{\mathcal{S}}_1$, examples of $u_p$ blowing up as $p\to 1^+$ on a subset of $\Omega$ of positive measure are made explicit. 
\\
This result has been specified in \cite{mst2} in the following sense: if $\|f\|_{W^{-1,\infty}(\Omega)}<1$ then $u_p$ degenerates to zero, if $\|f\|_{W^{-1,\infty}(\Omega)}=1$ then $u_p$ converges to an almost everywhere finite function and, finally, if $\|f\|_{W^{-1,\infty}(\Omega)}>1$ then $u_p$ blows up as $p\to 1^+$.
\\
For the Neumann case, we mention \cite{mrst}; here, in case $f\equiv 0$ and under the compatibility condition given by $\int_{\partial\Omega} g\, d\mathcal H^{N-1} = 0$, the authors show once again the degeneration/blow up phenomenon. If a suitable norm of $g$ is small enough, then $u_p$ converges almost everywhere in $\Omega$ to a function which is almost everywhere finite. By the way, if the same norm is large enough, $u_p$ converges to a function which is infinite on a set of positive measure.

\medskip

Therefore, it should be expected that the solutions $u_p$ to \eqref{pbintro} experience the same phenomena described above. Then a natural question is  determining the threshold which describes it.
As we will see, a key role is played by the following quantity 
\begin{equation*}
	M(f,g,\lambda)=\sup_{u\in W^{1,1}(\Omega)\backslash\{0\}}\displaystyle\frac{\displaystyle\int_\Omega f u\, dx+ \displaystyle\int_{\partial\Omega}g u\, d\mathcal H^{N-1}}{\displaystyle\int_\Omega |\nabla u|\, dx + \int_{\partial\Omega}\lambda |u|\, d\mathcal H^{N-1}},
\end{equation*}
which is finite once that $f\in L^{N,\infty}(\Omega)$ and $g\in L^\infty(\partial\Omega)$. We point out that the de\-no\-mi\-na\-tor defines a norm in $W^{1,1}(\Omega)$ which is equivalent to the usual one (see \cite[Section 2.7]{N}).
\\
Using $M(f,g,\lambda)$, our first result can be described as follows: if $M\le 1$ then the sequence $u_p$ is bounded in $BV(\Omega)$ with respect to $p$ and it converges to zero if $M< 1$. Moreover, the result is optimal in the sense that if $M>1$, then $u_p$ blows up on a set of positive measure as $p\to 1^+$ (see Theorem \ref{mainteo} below). Let us also mention that explicit examples show that when $M=1$ the limit function is not null in general (see Section \ref{radialSec} below).
This means that the asymptotic behaviour of $u_p$ is completely settled from $M$.
\\
A further remark on this threshold $M$ is in order. We stress that $M$ depends on both the volumetric datum $f$ and the boundary datum $g$. As far as we know, it is the first time that the phenomenon of degeneracy/blow up is studied when two data occur. For a single datum an essential tool is the H\"older inequality. In our setting this inequality does not lead to the desired value. So, we needed to extend it in order to handle both data (for details we refer to the appendix).

\subsection{Limit problem}
After studying the asymptotic behaviour, we mean to study the $1$-Laplace limit problem. That is we deal with existence of a solution, intended suitably (see Definition \ref{def} below), to the following problem
\begin{equation}
	\label{pbintrolimit}
	\begin{cases}
		\displaystyle -\Delta_1 u = f & \text{ in }\Omega,\\
		\displaystyle \frac{D u}{|D u|}\cdot \nu +\lambda \sg{u} = g& \text{ on } \partial\Omega,
	\end{cases}
\end{equation}
which is formally the limit as $p\to 1^+$ of \eqref{pbintro}. Here $\Delta_1 u:= \operatorname{div}\left(\frac{Du}{|Du|}\right)$ is the $1$-Laplacian operator.
\\
It is worth highlighting that, among others, the $1$-Laplace equations are strongly related to image processing, torsion and mean curvature problems (see \cite{abcm0, abcm, BCRS,ka,moser,OsSe,Sapiro}). From the mathematical point of view, there is huge literature concerning existence, uniqueness and regularity of solutions to problems involving the $1$-Laplace operator under Dirichlet boundary conditions; even the case $\lambda=0$ has been dealt with but, unsurprisingly, the literature is more limited. The study of this type of problems is a very active branch as shown by recent works such as \cite{aop,dgop,D,liliu, ms,mp,ss}.
\\
\medskip

Nevertheless, in all the papers cited above, a common denominator is that the solutions belong, in general, only to the $BV$-space. This clearly plays a role in the way the singular quotient $|Du|^{-1}Du$ needs to be intended both in $\Omega$ and on $\partial\Omega$. In \cite{D} and \cite{abcm} this difficulty is overcome for the first time by using a bounded vector field $\z$ whose divergence is a function enjoying some regularity.
Just have in mind that this allows  to define a distribution $(\z, Du)$ which couples one of these bounded vector fields and the gradient of a $BV$--function (see \cite{An} and \cite{CF}, in Section \ref{anzellottisec} below is briefly recalled). In other words this pairing, which is nothing more than the scalar product if the involving terms are regular enough, is a way to give sense to the singular quotient through a bounded vector field $\z$ satisfying $\|\z\|_\infty\le 1$ and $(\z,Du)=|Du|$, while the equation holds as $-\operatorname{div}\z = f$. 
\\
For a vector field $\z$ of this type it is also possible to define a weak normal trace (denoted by $[\z,\nu]$ below) which enters strongly in the definition of the boundary condition. Indeed, another common feature for $1$-Laplace equations is that the boundary datum is not necessarily attained in the sense of traces. Just to give an idea, in the Dirichlet problem, a standard request is  $[\z,\nu]\in \sg(-u)$ on  $\partial\Omega$. On the contrary, the Neumann boundary condition holds pointwise as shown in \cite{mrst}. In our framework, situated in between, we cannot expect the boundary condition to hold. Nevertheless, it should be satisfied when $\lambda$ tends to 0.

\medskip

As far as we know, the only related paper involving the 1-Laplace operator and a  boundary condition of Robin's type is \cite{mrs}. The authors deal with $f \equiv 0$ jointly  with a boundary condition as 
$$\displaystyle \frac{D u}{|D u|}\cdot \nu +\lambda u = g$$ 
where $\lambda$ is a positive constant and  $g\in L^2(\partial\Omega)$.  Note, however, that this condition is slightly different from ours. More general data $g$ can be handled in \cite{mrs} owing to the presence of the absorption term $\lambda u$. It also provides a regularizing effect on the solution which is proved to always lie in $L^2(\partial\Omega)$; this is something that in general we will not expect for solutions to \eqref{pbintrolimit}. 
\\
\medskip

Thus, we deal with existence of a solution to \eqref{pbintrolimit} under the assumptions $f\in L^{N,\infty}(\Omega)$, $g\in L^\infty(\partial\Omega)$ and $0\le \lambda\in L^\infty(\partial\Omega)$ (see Theorem \ref{existence}). Working by approximation through problems \eqref{pbintro}, the result is achieved by requiring that $M\le 1$. It worth mentioning that the presence of $\lambda \in L^\infty(\partial\Omega)$ (see also Section \ref{secL1} for the extension to the merely integrable case) produces extra difficulties with respect to the Dirichlet and Neumann cases. Indeed, for the equation in $\Omega$ a lower semicontinuity argument is needed (see Lemma \ref{low-sem1} below) which has also its own interest besides problem \eqref{pbintrolimit}. Even the boundary condition presents some challenges. Indeed, in order to characterize the solution on the boundary we will use an auxiliary function $\beta$ which is actually the sign function under some restriction on the data and in the zone where $\lambda$ is positive. If $|g-\lambda\sg(u)|\le 1$, the boundary condition holds pointwise on the set $\{\lambda>0\}\cap \{u\big|_{\partial\Omega}\ne 0\}$. Otherwise, if $|g-\lambda\sg(u)|>1$, then the boundary condition should be interpreted as $||Du|^{-1}Du\cdot \nu|$ is forced to be as high as possible. This is basically the weak way we mean the boundary term (see also Remark \ref{remdef} below). This feature is similar to that obtained in \cite[Definition 2.3 and Remark 2.7]{mrs}, but our approach is different.

\subsection{Plan of this paper}
The next Section is on preliminaries; the theory underlying the pairings $(\z, Du)$ and the weak trace $[\z, \nu]$ is sketching there. Section \ref{sec_behaviour} is dedicated to the asymptotic behaviour of $u_p$ as $p\to 1^+$. In Section \ref{sec:limitproblem} we consider the $1$-Laplace problem which formally arises by taking $p\to 1^+$ into \eqref{pbintro}. In Section \ref{sec:examples} we give some extensions and examples concerning the results of the previous two sections. Finally, in the appendix, we briefly consider two inequalities which are used throughout the paper.

\section{Preliminaries}

\subsection{Notation}
For a given function $v$ we denote by $v^+=\max(v,0)$ and by $v^-= -\min (v,0)$. For a fixed $k>0$, we define the truncation functions $T_{k}:\mathbb{R}\to\mathbb{R}$ as follows
\begin{align*}
	T_k(s):=&\max (-k,\min (s,k)).
\end{align*}
We denote by $|E|$ and by $\mathcal H^{N-1}(E)$ respectively the Lebesgue measure and the $(N-1)$--dimensional Hausdorff measure of a set $E$. Moreover $\chi_{E}$ stands for its characteristic function. 

If no otherwise specified, we denote by $C$ several positive constants whose value may change from line to line and, sometimes, on the same line. These values will only depend on the data but they will never depend on the indexes of the sequences we introduce below.

\subsection{Functional spaces}

Throughout this paper, $\Omega\subset \R^N$ (with $N\ge2$) stands for an open bounded set with, at least, Lipschitz boundary. The unit outward normal vector, which exists $\mathcal H^{N-1}$--a.e. on $\partial\Omega$, is denoted by $\nu$.

 We denote by $L^q(E)$ the usual Lebesgue space of $q$--summable functions on $E$. The symbol $L^q(\partial\Omega, \lambda)$ stands for the Lebesgue space having weight $\lambda$.
 
 A function $f$ belongs to the Marcinkiewicz (or weak Lebesgue) space $L^{N,\infty}(\Omega)$ when $|\{|f|>t\}|\le C t^{-N}$, for any $t>0$. We recall that $L^{N}(\Omega)\subset L^{N,\infty}(\Omega)\subset L^{N-\varepsilon}(\Omega)$, for any $\varepsilon>0$. We refer to \cite{hunt} for an overview on these spaces. 
 \\
 \medskip
We will denote by $W^{1,p}(\Omega)$ the usual Sobolev space, of measurable functions having weak derivative in $L^{p}(\Omega)^N$. It is a Banach space when endowed with the usual norm.
It is well-known that functions in Sobolev spaces have a trace on the boundary, this fact allows us to write $u\big|_{\partial\Omega}$. Moreover, if $u\in W^{1,1}(\Omega)$, then $u\big|_{\partial\Omega}\in L^1(\partial\Omega)$ and the embedding $W^{1,1}(\Omega)\to L^1(\partial\Omega)$ is onto.
On the other hand, the Sobolev space $W^{1,1}(\Omega)$ is compactly embedded in $L^1(\Omega)$ and continuously embedded into the Lorentz space $L^{\frac{N}{N-1},1}(\Omega)$ (see\cite{alvino}). Since this Lorentz space has $L^{N,\infty}(\Omega)$ as its dual (see \cite{hunt}), it follows that $fu\in L^1(\Omega)$ for every $f\in L^{N,\infty}(\Omega)$ and every $u\in W^{1,1}(\Omega)$.
Finally, for a nonnegative $\lambda\in L^\infty(\partial\Omega)$ not identically null, the norm defined in $W^{1,1}(\Omega)$ as
\begin{equation}\label{norma}
\|v\|_\lambda=\int_\Omega |\nabla v|\, dx+\int_{\partial\Omega}\lambda(x)|v|\, d\mathcal H^{N-1}\end{equation}
is equivalent to the usual norm in $W^{1,1}(\Omega)$ (see \cite[Section 2.7]{N}). 
\\
\medskip
The space of  functions of bounded variation is defined as
$$BV(\Omega):=\{ u\in L^1(\Omega)\> :\> Du \> \hbox{ is a Radon measure with finite variation} \},$$
which is a Banach space. 

Most of the features of $W^{1,1}(\Omega)$ also hold for $BV(\Omega)$, since the proofs can easily be adapted by approximation. In this paper, we will use the following facts:
\begin{enumerate}
\item equation \eqref{norma} defines a norm in $BV(\Omega)$ equivalent to the usual one;
\item the trace operator $BV(\Omega)\to L^1(\partial\Omega)$ is continuous and onto;
\item the embedding $BV(\Omega)\to L^1(\Omega)$ is compact;
\item  the embedding $BV(\Omega)\to L^{\frac{N}{N-1},1}(\Omega)$ is continuous.
\end{enumerate}
As a consequence of the last property, $fu\in L^1(\Omega)$ for every $f\in L^{N,\infty}(\Omega)$ and every $u\in BV(\Omega)$.
We refer to \cite{afp} for a complete account on this space.

\subsection{$L^\infty$-divergence vector fields}
\label{anzellottisec}
We briefly present the $L^\infty$-divergence-measure vector fields theory (see \cite{An} and \cite{CF}). We denote
\[X(\Omega):=\{ \z\in L^\infty(\Omega, \R^N) : \operatorname{div}\z \in L^{N,\infty}(\Omega)\}.\]


In \cite{An} the distribution $(\z,Dv): C^1_c(\Omega)\to \mathbb{R}$ is defined as 
\begin{equation*}\label{dist1}
	\langle(\z,Dv),\varphi\rangle:=-\int_\Omega v\varphi\operatorname{div}\z-\int_\Omega
	v\z\cdot\nabla\varphi,\quad \varphi\in C_c^1(\Omega),
\end{equation*}
which is well defined if $v\in BV(\Omega)$ and $\z$ is a bounded vector field such that its divergence belongs to $ L^N(\Omega)$. Moreover $(\z, Dv)$ is a Radon measure satisfying
\begin{equation*}\label{finitetotal1}
	\left| \int_B (\z, Dv) \right|  \le  \int_B \left|(\z, Dv)\right| \le  ||\z||_{L^\infty(U,\R^N)} \int_{B} |Dv|\,,
\end{equation*}
for all Borel sets $B$ and for all open sets $U$ such that $B\subset U \subset \Omega$.
\\
Let us also remark that, in \cite{An}, it is shown the existence of a weak trace on $\partial \Omega$ for the normal component of a bounded vector field $\z$ such that $\operatorname{div}\z \in L^1(\Omega)$. This is denoted by
$[\z, \nu]$ where $\nu(x)$ is the outward normal unit vector. Then it is proven that
\begin{equation*}\label{des1}
	||[\z,\nu]||_{L^\infty(\partial\Omega)}\le ||\z||_{\infty}\,.
\end{equation*}
Finally a Green formula holds:
$$	\int_{\Omega} v \operatorname{div}\z + \int_{\Omega} (\z, Dv) = \int_{\partial \Omega} v[\z, \nu] \ d\mathcal H^{N-1},$$
where  $\z \in L^\infty(\Omega,\R^N)$, $\operatorname{div}\z \in L^N(\Omega)$  and $v\in BV(\Omega)$. Let us stress that all previous results can be easily extended to the case where $\z\in X(\Omega)$ and $u\in BV(\Omega)$ thanks to the continuous embedding of $BV(\Omega)$ into $L^{\frac{N}{N-1},1}(\Omega)$.

\section{Asymptotic behaviour as $p\to 1^+$}
\label{sec_behaviour}
Let $\Omega$ be a bounded open set of $\mathbb{R}^N$ ($N\ge 2$) with Lipschitz boundary. We are interested into taking $p\to 1^+$ in the following Robin problem:
\begin{equation}
\label{pbmain}
\begin{cases}
\displaystyle -\Delta_p u_p = f & \text{ in }\Omega,\\
\displaystyle |\nabla u_p|^{p-2}\nabla u_p\cdot \nu +\lambda |u_p|^{p-2}u_p = g& \text{ on } \partial\Omega,
\end{cases}
\end{equation}
where $f\in L^{N,\infty}(\Omega)$, $\lambda \in L^{\infty}(\partial\Omega)$ is nonnegative but not identically null and finally $g\in L^\infty(\partial\Omega)$. The existence of $u_p\in W^{1,p}(\Omega)$ satisfying \eqref{pbmain} follows from \cite{ll}. We remark that $u_p$ can also be obtained as a minimum of a suitable functional (see Section \ref{sec:variational} below). For this section we are interested in the asymptotic behaviour of $u_p$ as $p\to 1^+$.

\medskip

To begin with, we introduce the key quantity
\begin{equation*}
	M(f,g,\lambda)=\sup_{u\in W^{1,1}(\Omega)\backslash\{0\}}\displaystyle\frac{\displaystyle\int_\Omega f u\, dx+ \displaystyle\int_{\partial\Omega}g u\, d\mathcal H^{N-1}}{\|u\|_\lambda},
\end{equation*}
which is always finite once that $f\in L^{N,\infty}(\Omega)$ and $g\in L^\infty(\partial\Omega)$.
In particular we show that if $M(f,g,\lambda)\le 1$, then we have an estimate of the family $u_p$ in $BV(\Omega)$; otherwise, as we will see, the solutions $u_p$ blow up on a set of positive measure as $p$ approaches 1. This is the content of main theorem of this section:
\begin{theorem}\label{mainteo}
	Given $f\in L^{N,\infty}(\Omega)$,  $\lambda \in L^{\infty}(\partial\Omega)$  nonnegative but not identically null and  $g\in L^\infty(\partial\Omega)$, let $u_p$ be a solution to \eqref{pbmain}. Then, up to subsequences, it holds:
	\begin{enumerate}
		\item [i)]  if $M(f,g,\lambda)< 1$ then $u_p$ converges almost everywhere in $\Omega$ to zero as $p\to 1^+$;
		\item [ii)] if $M(f,g,\lambda)= 1$ then $u_p$ converges almost everywhere in $\Omega$ to a function $u$ as $p\to 1^+$ which is almost everywhere finite;
		\item [iii)] if $M(f,g,\lambda) > 1$ then $|u_p|$ blows up either on a subset of $\Omega$ of positive Lebesgue measure or on a subset of $\partial\Omega$ of positive $\mathcal H^{N-1}$ measure.
	\end{enumerate}
\end{theorem}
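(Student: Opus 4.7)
The plan is to use $u_p$ itself as a test function in the weak form of \eqref{pbmain}, combine the resulting energy identity with Young's inequality and the very definition of $M$, and derive a single quantitative estimate on $\|u_p\|_\lambda$ which covers (i) and (ii) simultaneously. Case (iii) is then handled by contradiction, testing the equation against a near-maximizer of the ratio defining $M$.

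Taking $u_p$ as test function produces the energy identity
$$\int_\Omega|\nabla u_p|^p\,dx+\int_{\partial\Omega}\lambda|u_p|^p\,d\mathcal H^{N-1}=\int_\Omega fu_p\,dx+\int_{\partial\Omega}gu_p\,d\mathcal H^{N-1}\le M\|u_p\|_\lambda,$$
the last inequality being the very definition of $M$. Independently, Young's inequality with conjugate exponents $p$ and $p/(p-1)$ applied to the two pieces of $\|u_p\|_\lambda$ (using the factorization $\lambda|u_p|=\lambda^{1/p}|u_p|\cdot\lambda^{(p-1)/p}$ on the boundary) gives
$$\|u_p\|_\lambda\le \frac{1}{p}\Bigl(\int_\Omega|\nabla u_p|^p\,dx+\int_{\partial\Omega}\lambda|u_p|^p\,d\mathcal H^{N-1}\Bigr)+\frac{p-1}{p}\,C_0,$$
with $C_0=|\Omega|+\|\lambda\|_{L^1(\partial\Omega)}$. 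Chaining the two inequalities produces the master estimate $\bigl(1-\tfrac{M}{p}\bigr)\|u_p\|_\lambda\le \tfrac{p-1}{p}C_0$, from which (i) and (ii) follow immediately: if $M<1$, for $p$ close enough to $1$ one has $\|u_p\|_\lambda\le \tfrac{p-1}{p-M}C_0\to 0$, and the compact embedding $BV(\Omega)\hookrightarrow L^1(\Omega)$ (together with the equivalent $BV$-norm $\|\cdot\|_\lambda$) gives $u_p\to 0$ a.e. along a subsequence; if $M=1$, the estimate reduces to $\|u_p\|_\lambda\le C_0$ and compactness again delivers a subsequence converging a.e. to some a.e. finite $u\in BV(\Omega)$.

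For (iii) I would argue by contradiction. Since $M>1$, by density pick $v\in C^1(\overline\Omega)$ with $\int_\Omega fv\,dx+\int_{\partial\Omega}gv\,d\mathcal H^{N-1}>\|v\|_\lambda$. Testing \eqref{pbmain} against $v$, applying H\"older in each of the two integrals on the left and then a two-term discrete H\"older, and finally invoking the energy identity, one obtains
$$\int_\Omega fv\,dx+\int_{\partial\Omega}gv\,d\mathcal H^{N-1}\le \Bigl(\int_\Omega fu_p\,dx+\int_{\partial\Omega}gu_p\,d\mathcal H^{N-1}\Bigr)^{\!\tfrac{p-1}{p}}\Bigl(\int_\Omega|\nabla v|^p\,dx+\int_{\partial\Omega}\lambda|v|^p\,d\mathcal H^{N-1}\Bigr)^{\!\tfrac{1}{p}}.$$
The second factor tends to $\|v\|_\lambda$ as $p\to 1^+$ while the left-hand side is strictly bigger than $\|v\|_\lambda$; hence the first factor must diverge, so by the master estimate and the definition of $M$ one has $\|u_p\|_\lambda\to\infty$.

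The final step is to transfer this norm blow-up into a pointwise blow-up of $|u_p|$ on a set of positive measure in $\Omega$ or on $\partial\Omega$. To this end I would introduce the normalized sequence $w_p=u_p/\|u_p\|_\lambda$, which satisfies $\|w_p\|_\lambda=1$, extract an a.e. limit $w$ via the compactness $BV(\Omega)\hookrightarrow L^1(\Omega)$, and show that $w\not\equiv 0$ either in $\Omega$ or on $\partial\Omega$; at any point where $w\neq 0$ the identity $u_p=w_p\|u_p\|_\lambda$ then forces $|u_p|\to\infty$, contradicting the contrary assumption. The main obstacle is precisely the nontriviality of $w$, which one expects to deduce from the rescaled equation $\|u_p\|_\lambda^{p-1}\bigl(\int_\Omega|\nabla w_p|^p\,dx+\int_{\partial\Omega}\lambda|w_p|^p\,d\mathcal H^{N-1}\bigr)=\int_\Omega fw_p\,dx+\int_{\partial\Omega}gw_p\,d\mathcal H^{N-1}$ combined with the extended H\"older inequality of the appendix---the one that handles the volume datum $f\in L^{N,\infty}(\Omega)$ and the boundary datum $g\in L^\infty(\partial\Omega)$ simultaneously.
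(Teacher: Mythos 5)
Your treatment of cases (i) and (ii) is correct and, modulo packaging, is the paper's own argument (Lemma \ref{lemmastimafond}): the paper folds your Young-inequality step into the combined H\"older inequality of Proposition \ref{holder} and obtains $\|u_p\|_\lambda\le M^{1/(p-1)}\bigl(|\Omega|+\int_{\partial\Omega}\lambda\,d\mathcal H^{N-1}\bigr)$, while your chaining gives $\bigl(1-\tfrac{M}{p}\bigr)\|u_p\|_\lambda\le\tfrac{p-1}{p}C_0$; both yield boundedness for $M\le1$ and decay to zero for $M<1$. Your derivation that $\|u_p\|_\lambda\to\infty$ when $M>1$ (testing against a near-maximizer $v$, H\"older, and the energy identity) is also correct and is a clean alternative not present in the paper.

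The genuine gap is the last step of (iii). Divergence of $\|u_p\|_\lambda$ does not by itself give pointwise blow-up of $|u_p|$ on a set of positive measure, and the normalization $w_p=u_p/\|u_p\|_\lambda$ does not close this. First, you must rule out $w\equiv0$, which you acknowledge but do not do, and there is a real scenario to exclude: $\int_\Omega|\nabla u_p|\,dx$ can diverge through oscillation or concentration while $u_p$ remains pointwise bounded almost everywhere, in which case $w\equiv0$ and no contradiction arises. Second, the alternative ``$w\neq0$ on $\partial\Omega$'' is not accessible from $L^1(\Omega)$-compactness, because the trace operator is not continuous under $L^1(\Omega)$ (or weak-* $BV$) convergence, so boundary mass can be lost in the limit. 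Third, the rescaled identity you invoke is indeterminate on both sides ($\|u_p\|_\lambda^{p-1}$ with $\|u_p\|_\lambda\to\infty$ and $p\to1^+$ is of the form $\infty^0$), so it does not obviously force $w\not\equiv0$. The paper's route is structurally different and avoids all of this: it first proves that $u_p$ converges a.e.\ in $\Omega$ to some $u$ (via Step 2 of \cite{mst2}), then tests with the truncations $T_k(u_p)$ (Lemma \ref{teorema2}) to produce limit objects $\z_k$, $\beta_k$ with $\|\z_k\|_\infty\le1$ and $\|\beta_k\chi_{\{\lambda>0\}}\|_\infty\le1$, identifies them as $\z\chi_{\{|u|<k\}}$ and $\beta\chi_{\{\lambda>0\}\cap\{|u|<k\}}$, and concludes from $\max\{\|\z\|_\infty,\|\beta\chi_{\{\lambda>0\}}\|_\infty\}=M>1$ (identity \eqref{ide1}) that $\{|u|=+\infty\}$ must have positive measure in $\Omega$ or positive $\mathcal H^{N-1}$ measure in $\{\lambda>0\}$. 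To complete your proof you would need either to reproduce this truncation--identification mechanism or to supply an independent argument converting norm blow-up into pointwise blow-up on a set of positive measure.
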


\begin{remark}
	It is worth to  highlighting that in Section \ref{radialSec} below the results of the previous theorem are explicitly computed for the case $\Omega$ as a ball. In particular, let us note that in case $M=1$ one can actually find explicit  examples of limit functions $u$ which are not null.
\end{remark}

\begin{remark}
In the homogeneous Dirichlet case, that is when formally $\lambda=+\infty$, then
\[
M=\sup_{u\in W_{0}^{1,1}(\Omega)\backslash\{0\}}\displaystyle\frac{\displaystyle\int_\Omega f u\, dx}{\displaystyle\int_{\Omega}|\nabla u|dx}.
\]
By the Hardy-Littlewood and Sobolev inequalities, it is easy to see that
\[
M\le \frac{\|f\|_{L^{N,\infty}(\Omega)}}{N\omega_{N}^{1/N}},
\]
where $\omega_{N}$ is the volume of the unit ball in $\R^{N}$. This implies that the smallness condition on $f$  considered in \cite{ct} in order to obtain a finite limit for $u_{p}$, namely $\|f\|_{L^{N,\infty}(\Omega)}\le N\omega_{N}^{1/N}$, always implies that $M\le 1$ (see also \cite{mst1}).
\end{remark}

\medskip

We start stating and proving the uniform estimate under the smallness condition on $M(f,g,\lambda)$.

\begin{lemma}\label{lemmastimafond}
	Let $f\in L^{N,\infty}(\Omega)$, let $\lambda \in L^{\infty}(\partial\Omega)$ be nonnegative but not identically null and let $g\in L^\infty(\partial\Omega)$. If $u_p$ is a solution to \eqref{pbmain}, then it holds
	\begin{equation*}\label{stimaulambda}
			\| u_p\|_\lambda 		\le M(f,g,\lambda)^{\frac1{p-1}}\left[|\Omega|+\int_{\partial\Omega}\lambda\, d\mathcal H^{N-1}\right].
	\end{equation*}
	 Furthermore if $M(f,g,\lambda)\le 1$ then $u_p$ is bounded in $BV(\Omega)$ with respect to $p$ and it converges, up to a subsequence, *-weakly in $BV(\Omega)$ to a function $u$ as $p\to 1^+$. In particular if $M(f,g,\lambda) < 1$ then $u$ is identically null. 	
\end{lemma}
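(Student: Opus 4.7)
The plan is to derive the target inequality by taking $u_p$ itself as test function in the weak formulation of \eqref{pbmain} and then matching the resulting energy with the variational quantity defining $M$. Since $u_p \in W^{1,p}(\Omega)$, this substitution yields the energy identity
$$\int_\Omega |\nabla u_p|^p\,dx + \int_{\partial\Omega} \lambda |u_p|^p\,d\mathcal{H}^{N-1} = \int_\Omega f u_p\,dx + \int_{\partial\Omega} g u_p\,d\mathcal{H}^{N-1}.$$
By the very definition of $M(f,g,\lambda)$, the right-hand side is bounded from above by $M\|u_p\|_\lambda$, provided $u_p\not\equiv 0$ (otherwise the inequality is trivial).

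Next, I would invoke the ``two-term'' H\"older-type inequality deferred to the appendix, which (applying H\"older separately on $(\Omega, dx)$ and on $(\partial\Omega, \lambda\,d\mathcal{H}^{N-1})$, and then combining via the discrete H\"older inequality for pairs) gives
$$\|u_p\|_\lambda \le \Bigl(|\Omega| + \int_{\partial\Omega}\lambda\,d\mathcal{H}^{N-1}\Bigr)^{\!1/p'}\Bigl(\int_\Omega|\nabla u_p|^p\,dx + \int_{\partial\Omega}\lambda |u_p|^p\,d\mathcal{H}^{N-1}\Bigr)^{\!1/p}.$$
Substituting the energy identity together with the bound by $M\|u_p\|_\lambda$ into the second factor and solving for $\|u_p\|_\lambda$ (the exponent algebra being $p'/p = 1/(p-1)$) produces exactly the claimed estimate.

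Finally, when $M\le 1$ the factor $M^{1/(p-1)}$ is $\le 1$, so $\|u_p\|_\lambda$ is uniformly bounded in $p$; by equivalence of $\|\cdot\|_\lambda$ with the standard $W^{1,1}$-norm, the family is bounded in $W^{1,1}(\Omega)\subset BV(\Omega)$. The compact embedding $BV(\Omega)\hookrightarrow L^1(\Omega)$ combined with the $*$-weak compactness of bounded sets in $BV(\Omega)$ extracts, up to a subsequence, a $*$-weak limit $u\in BV(\Omega)$. When $M<1$, one has $M^{1/(p-1)}\to 0$ as $p\to 1^+$, so $\|u_p\|_\lambda\to 0$, forcing $u_p\to 0$ in $L^1(\Omega)$ and hence $u\equiv 0$ a.e.

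I expect the main technical hurdle to lie in the second step: the standard H\"older inequality applied to a single integral does not suffice, since one must treat the interior and boundary contributions simultaneously in order to recover precisely the exponent $1/(p-1)$ (and the sharp geometric constant $|\Omega|+\int_{\partial\Omega}\lambda\,d\mathcal{H}^{N-1}$). This is exactly the reason the authors flag an extension of H\"older's inequality as needed and postpone its proof to the appendix.
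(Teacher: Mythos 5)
Your proposal is correct and follows essentially the same route as the paper: test with $u_p$, bound the right-hand side by $M\|u_p\|_\lambda$, apply the combined interior--boundary H\"older inequality from the appendix, and solve for $\|u_p\|_\lambda$ (the paper first isolates the energy term $A$ and then reapplies the H\"older inequality, but the algebra is equivalent), after which the compactness and degeneration conclusions follow exactly as you describe.
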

\begin{proof}
Let us take $u_p$ as test function in \eqref{pbmain}, it yields
\begin{equation*}
	\begin{aligned}
	\int_\Omega|\nabla u_p|^pdx+\int_{\partial\Omega}\lambda|u_p|^p d\mathcal H^{N-1}
	&=\int_\Omega f u_p\, dx+\int_{\partial\Omega}g u_p\, d\mathcal H^{N-1}\\
	&\le M(f,g,\lambda)\left[\int_\Omega|\nabla u_p|\, dx+\int_{\partial\Omega}\lambda|u_p|\,  d\mathcal H^{N-1}\right]
	\end{aligned}
\end{equation*}
Denoting
\[A^p=\int_\Omega|\nabla u_p|^pdx+\int_{\partial\Omega}\lambda|u_p|^p d\mathcal H^{N-1}
\qquad B^{p'}=|\Omega|+\int_{\partial\Omega}\lambda\, d\mathcal H^{N-1}\,,\]
one can apply Proposition \ref{holder} in order to obtain
\begin{equation*}\label{NuovoHolder}
	A^p\le M(f,g,\lambda)\left[\int_\Omega|\nabla u_p|\, dx+\int_{\partial\Omega}\lambda|u_p|\,  d\mathcal H^{N-1}\right]\le M(f,g,\lambda) AB\,,
\end{equation*}
so that
\[A^{p-1}\le M(f,g,\lambda) B.\]
Hence,
\[\left[\int_\Omega|\nabla u_p|^pdx+\int_{\partial\Omega}\lambda|u_p|^p d\mathcal H^{N-1}\right]^{\frac{p-1}{p}}\le M(f,g,\lambda) \left[|\Omega|+\int_{\partial\Omega}\lambda\, d\mathcal H^{N-1}\right]^{\frac{p-1}{p}},\]
from which we deduce
\begin{equation}\label{stimaprinc}
	\int_\Omega|\nabla u_p|^pdx+\int_{\partial\Omega}\lambda|u_p|^p d\mathcal H^{N-1}\le M(f,g,\lambda)^{\frac{p}{p-1}} \left[|\Omega|+\int_{\partial\Omega}\lambda\, d\mathcal H^{N-1}\right].
\end{equation}
Then it follows from Proposition \ref{holder} and from \eqref{stimaprinc} that we get
\begin{equation}\label{StimaPrincipale}
	\begin{aligned}
	\| u_p\|_\lambda&=\int_\Omega|\nabla u_p|\, dx+\int_{\partial\Omega}\lambda|u_p|\,  d\mathcal H^{N-1}\\
	&\le \left[\int_\Omega|\nabla u_p|^pdx+\int_{\partial\Omega}\lambda|u_p|^p d\mathcal H^{N-1}\right]^{\frac{1}{p}}
	\left[|\Omega|+\int_{\partial\Omega}\lambda\, d\mathcal H^{N-1}\right]^{\frac{1}{p'}}\\
	&\le M(f,g,\lambda)^{\frac1{p-1}}\left[|\Omega|+\int_{\partial\Omega}\lambda\, d\mathcal H^{N-1}\right].
	\end{aligned}
\end{equation}
If $M(f,g,\lambda) \le 1$ the previous estimate reads as
\[\| u_p\|_\lambda\le |\Omega|+\int_{\partial\Omega}\lambda\, d\mathcal H^{N-1}.\]
Then standard compactness arguments hold and there exists a function $u$ such that, up to subsequences, $u_p$ converges to $u$ *-weakly in $BV(\Omega)$ as $p\to 1^+$.

\medskip

Moreover the same estimate \eqref{StimaPrincipale}, if $M(f,g,\lambda)< 1$, guarantees that
\[\lim_{p\to1^+}\|u_p\|_\lambda=0,\]
which means that $u_p$ goes to zero almost everywhere in $\Omega$ as $p\to1^+$.	
\end{proof}

Let us show now that $|\nabla u_p|^{p-2}\nabla u_p$ and $|u_p|^{p-2}u_p$
weakly converges to some functions in $\Omega$ and on $\partial\Omega$ as $p\to 1^+$. Next theorem identifies these objects.

\begin{lemma}\label{teorema1}
	Under the assumptions of Lemma \ref{lemmastimafond}, let $u_p$ be the solution to problem \eqref{pbmain}.
	Then there exist $\z\in L^\infty(\Omega; \R^N)$ and $\beta\in L^s(\partial\Omega,\lambda)$ for every $s<\infty$ such that $\beta\chi_{\{\lambda>0\}}\in L^\infty(\partial\Omega)$ satisfying, up to subsequences, the following convergences
	\begin{equation}\label{conv1}
		|\nabla u_p|^{p-2}\nabla u_p \rightharpoonup \z\qquad \hbox{weakly in }L^s(\Omega; \R^N) \hbox{ for every }1\le s<\infty,
	\end{equation}
	\begin{equation}\label{conv2}
		|u_p|^{p-2} u_p \rightharpoonup \beta\qquad \hbox{weakly in }L^s(\partial\Omega, \lambda) \hbox{ for every }1\le s<\infty.
	\end{equation}
	Moreover, the following identities hold
	\begin{equation}\label{ide1}
		\max\{\|\z\|_\infty , \|\beta\chi_{\{\lambda>0\}}\|_\infty\}=M(f,g,\lambda)
	\end{equation}
	\begin{equation}\label{ide2}
		-\Div\z=f\qquad \hbox{in }\mathcal D'(\Omega)
	\end{equation}
	\begin{equation}\label{ide3}
		[\z,\nu]+\lambda \beta=g\qquad\mathcal H^{N-1}\hbox{--a.e. on }\partial\Omega
	\end{equation}
\end{lemma}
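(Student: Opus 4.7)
The starting point is the estimate \eqref{stimaprinc}, which gives uniform control of both $\int_\Omega|\nabla u_p|^p\,dx$ and $\int_{\partial\Omega}\lambda|u_p|^p\,d\mathcal H^{N-1}$ by $M(f,g,\lambda)^{p/(p-1)}[|\Omega|+\int_{\partial\Omega}\lambda\,d\mathcal H^{N-1}]$. Fix any $s<\infty$ and take $p$ close enough to $1$ that $s(p-1)<p$; then Jensen's inequality applied to the concave function $t\mapsto t^{s(p-1)/p}$, against the finite measures $dx$ on $\Omega$ and $\lambda\,d\mathcal H^{N-1}$ on $\partial\Omega$ respectively, converts these $L^p$ bounds into uniform bounds on $\||\nabla u_p|^{p-2}\nabla u_p\|_{L^s(\Omega)}$ and $\||u_p|^{p-2}u_p\|_{L^s(\partial\Omega,\lambda)}$ that, in the limit $p\to 1^+$, tend to $|\Omega|^{1/s}M$ and $(\int_{\partial\Omega}\lambda)^{1/s}M$ respectively. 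A diagonal extraction then produces $\z\in L^s(\Omega;\R^N)$ and $\beta\in L^s(\partial\Omega,\lambda)$ realising \eqref{conv1}--\eqref{conv2} for every finite $s$. Letting $s\to\infty$ (and, on the boundary, applying Chebyshev against $\lambda\,d\mathcal H^{N-1}$ on $\{|\beta|>t\}\cap\{\lambda>0\}$ for $t>M$) delivers $\|\z\|_\infty\le M$ and $\|\beta\chi_{\{\lambda>0\}}\|_\infty\le M$; this is half of \eqref{ide1}.

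Identity \eqref{ide2} is then immediate: test \eqref{pbmain} against $\varphi\in C_c^1(\Omega)$ and pass to the limit using \eqref{conv1}. Identity \eqref{ide3} follows by testing instead against $\varphi\in C^1(\overline\Omega)$, passing to the limit with the help of both \eqref{conv1} and \eqref{conv2}, and then eliminating the interior terms via the Green formula recalled in Section \ref{anzellottisec}, which is available because $\z\in L^\infty(\Omega;\R^N)$ and $\operatorname{div}\z=-f\in L^{N,\infty}(\Omega)$ (so $\z\in X(\Omega)$). What is left is $\int_{\partial\Omega}\varphi\,([\z,\nu]+\lambda\beta-g)\,d\mathcal H^{N-1}=0$ for every $\varphi\in C^1(\overline\Omega)$, and \eqref{ide3} follows by density of traces on $\partial\Omega$.

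The only genuinely non-routine step is the reverse inequality in \eqref{ide1}. The idea is to revisit the variational definition of $M(f,g,\lambda)$. For any $v\in W^{1,1}(\Omega)\setminus\{0\}$, the Green formula applied to $\z$ together with \eqref{ide2}--\eqref{ide3} rewrites the numerator defining $M(f,g,\lambda)$ as $\int_\Omega\z\cdot\nabla v\,dx+\int_{\partial\Omega}\lambda\beta v\,d\mathcal H^{N-1}$, which is bounded above by $\max\{\|\z\|_\infty,\|\beta\chi_{\{\lambda>0\}}\|_\infty\}\,\|v\|_\lambda$; taking the supremum over $v$ yields $M(f,g,\lambda)\le\max\{\|\z\|_\infty,\|\beta\chi_{\{\lambda>0\}}\|_\infty\}$, and combined with the first paragraph this closes \eqref{ide1}. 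The subtle point to watch here is that the boundary integrand $\lambda\beta v$ really does see only $\beta\chi_{\{\lambda>0\}}$, so the essential supremum on $\{\lambda>0\}$ is what one must use; this is precisely what the Chebyshev step in the first paragraph secures, and it is why merely weighted $L^s$ control of $\beta$ is not enough.
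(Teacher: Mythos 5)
Your proposal is correct and follows essentially the same route as the paper: uniform $L^s$ bounds derived from the basic energy estimate (you via Jensen applied separately to the two integrals, the paper via its joint H\"older inequality of Proposition \ref{holder}), diagonal extraction, lower semicontinuity plus $s\to\infty$ (your Chebyshev step is exactly the content of the paper's Proposition \ref{bounded}) for the inequality $\le$ in \eqref{ide1}, and the same testing-plus-density argument combined with Green's formula for \eqref{ide2}, \eqref{ide3} and the reverse inequality in \eqref{ide1}. The differences (test functions in $C^1(\overline\Omega)$ versus $W^{1,2}(\Omega)$, deriving the reverse inequality from the already-proved identities rather than directly from the weak formulation) are cosmetic and introduce no gap.
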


\begin{proof}
	It follows from Lemma \ref{lemmastimafond} that it holds
	\[\int_\Omega |\nabla u_p|^{p}dx+\int_{\partial\Omega}\lambda|u_p|^{p} d\mathcal H^{N-1}\le M(f,g,\lambda)^{\frac p{p-1}} \Lambda\]
	where $\Lambda=|\Omega|+\int_{\partial\Omega}\lambda\, d\mathcal H^{N-1}$.
	Let us now fix $s\in (1,\infty)$ and consider $\displaystyle 1<p<\frac s{s-1}$. By Proposition \ref{holder} below, it yields
	\begin{equation}\label{stima1}
		\begin{aligned}
		&\left[\int_\Omega |\nabla u_p|^{(p-1)s}dx+\int_{\partial\Omega}\lambda|u_p|^{(p-1)s} d\mathcal H^{N-1}\right]^{\frac1s}\\
		&\le \left[\int_\Omega |\nabla u_p|^{p}dx+\int_{\partial\Omega}\lambda|u_p|^{p} d\mathcal H^{N-1}\right]^{\frac{p-1}p}\Lambda^{\frac1s-\frac{p-1}p}
		\le M(f,g,\lambda)\Lambda^{\frac1s}
		\end{aligned}
	\end{equation}
	from where we infer that this family is bounded. Thus, up to subsequences, there exist $\z_s\in L^s(\Omega; \R^N)$ and $\beta_s\in L^s(\partial\Omega, \lambda)$ satisfying
	\[|\nabla u_p|^{p-2}\nabla u_p \rightharpoonup \z_s\qquad \hbox{weakly in }L^s(\Omega; \R^N)
	\]
	and
	\[
	|u_p|^{p-2} u_p \rightharpoonup \beta_s\qquad \hbox{weakly in }L^s(\partial\Omega, \lambda) \]
	Since these facts hold for every $s$, two diagonal procedures allow us to find  $\z\in L^s(\Omega; \R^N)$ and $\beta\in L^s(\Omega,\lambda)$ for all $s\in(1,\infty)$, and satisfying \eqref{conv1} and \eqref{conv2}.
	
	Moreover, having in mind the lower semicontinuity of the $s$--norm with respect to the weak convergence, we may let $p$ go to 1 in \eqref{stima1}; it yields
	\[\left[\int_\Omega |\z|^s\, dx+\int_{\partial\Omega}\lambda|\beta|^{s}\, d\mathcal H^{N-1}\right]^{\frac1s}\\
	\le M(f,g,\lambda)\Lambda^{\frac1s}\]
	for every $s\in (1,\infty)$.
	Thanks to Proposition \ref{bounded} below, we deduce that $\z\in L^\infty(\Omega; \R^N)$ and $\beta\chi_{\{\lambda>0\}}\in L^\infty(\partial\Omega)$. In addition, we may take the limit as $s$ tends to $\infty$ and obtain
	\[\max\{\|\z\|_\infty, \|\beta\chi_{\{\lambda>0\}}\|_\infty\}\le M(f,g,\lambda).\]
	Now let us show the reverse inequality in order to deduce \eqref{ide1}; to this aim we take $v\in W^{1,2}(\Omega)$ as test function in \eqref{pbmain} (with $1<p<2$) to get
	\begin{equation*}
		\int_\Omega fv\, dx+\int_{\partial\Omega} gv\, d\mathcal H^{N-1}=\int_\Omega|\nabla u_p|^{p-2}\nabla u_p\cdot \nabla v\, dx+\int_{\partial\Omega}\lambda |u_p|^{p-2}u_p v\, d\mathcal H^{N-1}.
	\end{equation*}
	Letting $p$ go to 1, we deduce
	\begin{equation*}
		\begin{aligned}
		\int_\Omega fv\, dx+\int_{\partial\Omega} gv\, d\mathcal H^{N-1} &=\int_\Omega\z\cdot \nabla v\, dx+\int_{\partial\Omega}\lambda \beta v\, d\mathcal H^{N-1}\\
		&\le \|\z\|_\infty\int_\Omega|\nabla v|\, dx+\|\beta\chi_{\{\lambda>0\}}\|_\infty\int_{\partial\Omega}\lambda |v|\, d\mathcal H^{N-1}\\
		&\le \max\{\|\z\|_\infty,\|\beta\chi_{\{\lambda>0\}}\|_\infty\}\|v\|_\lambda.
		\end{aligned}	
	\end{equation*}
	By density, it yields
	\[\int_\Omega fv\, dx+\int_{\partial\Omega} gv\, d\mathcal H^{N-1}\le \max\{\|\z\|_\infty,\|\beta\chi_{\{\lambda>0\}}\|_\infty\}\|v\|_\lambda\]
	for every $v\in W^{1,1}(\Omega)$. Therefore,
	\[M(f,g,\lambda)\le \max\{\|\z\|_\infty,\|\beta\chi_{\{\lambda>0\}}\|_\infty\},\]
	which gives \eqref{ide1}.
	
	\medskip
	
	The validity of \eqref{ide2} simply follows by taking $\varphi\in C_0^\infty(\Omega)$ as test function in \eqref{pbmain} and letting $p\to 1^+$.

	Now for $1<p<2$, we choose $v\in W^{1,2}(\Omega)$ as test function in \eqref{pbmain} obtaining:
	\begin{equation*}
		\int_\Omega fv\, dx+\int_{\partial\Omega} gv\, d \mathcal H^{N-1}=\int_\Omega|\nabla u_p|^{p-2}\nabla u_p\cdot \nabla v\, dx+\int_{\partial\Omega}\lambda |u_p|^{p-2}u_p v\, d\mathcal H^{N-1}.
	\end{equation*}
	Letting $p\to 1^+$, it yields
	\begin{equation*}
		\int_\Omega fv\, dx+\int_{\partial\Omega} gv\, \mathcal H^{N-1}=\int_\Omega \z\cdot \nabla v\, dx+\int_{\partial\Omega}\lambda \beta v\, d\mathcal H^{N-1}
	\end{equation*}
	for every $v\in W^{1,2}(\Omega)$. This equality can be extended to every $v\in W^{1,1}(\Omega)$ by density. Using \eqref{ide2} and Green's formula, we deduce
	\begin{equation*}
		\int_{\partial\Omega} gv\, \mathcal H^{N-1}=\int_{\partial\Omega}v[\z,\nu]\, d\mathcal H^{N-1}+\int_{\partial\Omega}\lambda \beta v\, d\mathcal H^{N-1}
	\end{equation*}
	for all $v\in W^{1,1}(\Omega)$, wherewith it holds for every $v\in L^1(\partial\Omega)$. Thus, we have obtained \eqref{ide3}.
\end{proof}

The following lemma focuses on the behaviour of the objects studied in the previous lemma when the family $u_p$ is truncated at a certain level. This will be useful in the next Lemma \ref{lemblowup}.

\begin{lemma}\label{teorema2}
	Under the assumptions of Lemma \ref{lemmastimafond}, let $u_p$ be the solution to problem \eqref{pbmain}.
	For each $k>0$ there exist $\z_k\in L^\infty(\Omega; \R^N)$ and $\beta_k$ such that $\beta_k\chi_{\{\lambda>0\}}\in L^\infty(\partial\Omega)$ satisfying $\|\z_k\|_\infty\le1$, $\|\beta_k\chi_{\{\lambda>0\}}\|_\infty\le1$ and, up to subsequences, the following convergences hold
	\begin{equation*}\label{conv3}
		|\nabla u_p|^{p-2}\nabla u_p \chi_{\{|u_p|<k\}} \rightharpoonup \z_k\qquad \hbox{weakly in }L^s(\Omega; \R^N) \hbox{ for every }1\le s<\infty,
	\end{equation*}
	\begin{equation*}\label{conv4}
		|u_p|^{p-2} u_p \chi_{\{|u_p|<k\}}\rightharpoonup \beta_k\qquad \hbox{weakly in }L^s(\partial\Omega, \lambda) \hbox{ for every }1\le s<\infty.
	\end{equation*}
\end{lemma}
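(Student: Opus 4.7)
The plan is to mimic the argument of Lemma \ref{teorema1} using $T_k(u_p)$ in place of $u_p$ as test function. The essential new ingredient is that the a priori bound $\|T_k(u_p)\|_{L^\infty}\le k$ controls the right-hand side of the equation directly, without appealing to the quantity $M(f,g,\lambda)$, and this crude control is precisely what produces the sharper $L^\infty$ bounds $\|\z_k\|_\infty\le 1$ and $\|\beta_k\chi_{\{\lambda>0\}}\|_\infty\le 1$ (instead of the bound $M(f,g,\lambda)$ obtained in Lemma \ref{teorema1}).

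First I take $T_k(u_p)\in W^{1,p}(\Omega)$ as test function in \eqref{pbmain}. The gradient contribution is $\int_\Omega|\nabla u_p|^p\chi_{\{|u_p|<k\}}dx=\int_\Omega|\nabla T_k(u_p)|^pdx$, while a direct case analysis on $\{|u_p|<k\}$ and $\{|u_p|\ge k\}$ gives the pointwise inequality
\[
|u_p|^{p-2}u_p\,T_k(u_p)=|u_p|^{p-1}\min(|u_p|,k)\ge |T_k(u_p)|^p \quad\text{on } \partial\Omega.
\]
Since $f\in L^{N,\infty}(\Omega)\subset L^1(\Omega)$ on bounded $\Omega$, this yields the uniform-in-$p$ bound
\[
\int_\Omega|\nabla T_k(u_p)|^p dx+\int_{\partial\Omega}\lambda|T_k(u_p)|^p d\mathcal H^{N-1}\le k\bigl(\|f\|_{L^1(\Omega)}+\|g\|_{L^\infty(\partial\Omega)}\mathcal H^{N-1}(\partial\Omega)\bigr)=:C_k.
\]
Then, for each $s\in(1,\infty)$ and each $p$ close enough to $1$ that $(p-1)s<p$, applying Proposition \ref{holder} exactly as in \eqref{stima1} together with $|\nabla u_p|\chi_{\{|u_p|<k\}}=|\nabla T_k(u_p)|$ and $|u_p|\chi_{\{|u_p|<k\}}\le|T_k(u_p)|$ gives
\[
\left[\int_\Omega|\nabla u_p|^{(p-1)s}\chi_{\{|u_p|<k\}}dx+\int_{\partial\Omega}\lambda|u_p|^{(p-1)s}\chi_{\{|u_p|<k\}}d\mathcal H^{N-1}\right]^{\frac1s}\le C_k^{\frac{p-1}p}\Lambda^{\frac1s-\frac{p-1}p},
\]
with $\Lambda=|\Omega|+\int_{\partial\Omega}\lambda\,d\mathcal H^{N-1}$; this right-hand side is bounded uniformly in $p$.

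Standard weak compactness in each $L^s$, combined with a diagonal extraction in $s$, then produces a subsequence and limits $\z_k$ and $\beta_k$ satisfying \eqref{conv3} and \eqref{conv4} for every $s\in[1,\infty)$. Passing $p\to 1^+$ in the previous inequality by lower semicontinuity of the $L^s$-norm under weak convergence yields
\[
\left[\int_\Omega|\z_k|^s dx+\int_{\partial\Omega}\lambda|\beta_k|^s d\mathcal H^{N-1}\right]^{\frac1s}\le\Lambda^{\frac1s}
\]
for every $s\in(1,\infty)$, and a final application of Proposition \ref{bounded} as $s\to\infty$ (just as at the end of the proof of Lemma \ref{teorema1}) gives $\|\z_k\|_\infty\le 1$ and $\|\beta_k\chi_{\{\lambda>0\}}\|_\infty\le 1$. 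The only non-routine point in the whole scheme is the boundary inequality displayed above, which is what makes the boundary contribution produced by the test function $T_k(u_p)$ Hölder-compatible with the target $\int_{\partial\Omega}\lambda|T_k(u_p)|^p\,d\mathcal H^{N-1}$; everything else is a direct repetition of Lemma \ref{teorema1} with truncated data.
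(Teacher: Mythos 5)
Your proof is correct, and for the interior part it coincides with the paper's: test with $T_k(u_p)$, drop to the bound $\int_\Omega|\nabla T_k(u_p)|^p\,dx\le k(\|f\|_{L^1(\Omega)}+\|g\|_{L^1(\partial\Omega)})$, apply H\"older to bound $|\nabla u_p|^{p-2}\nabla u_p\chi_{\{|u_p|<k\}}$ in every $L^s$, extract diagonally, and use weak lower semicontinuity of the $L^s$-norms (the factor $k^{\frac{p-1}{p}}\to1$ as $p\to1^+$) to conclude $\|\z_k\|_\infty\le1$. Where you diverge is the boundary term: you establish the pointwise inequality $|u_p|^{p-1}\min(|u_p|,k)\ge|T_k(u_p)|^p$ so as to keep the boundary integral in the energy estimate, then run the combined H\"older inequality (Proposition \ref{holder}) and Proposition \ref{bounded} on the pair $(\z_k,\beta_k)$. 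The paper instead disposes of the boundary family in one line: since $|u_p|^{p-1}\chi_{\{|u_p|<k\}}\le k^{p-1}$ pointwise on $\partial\Omega$ and $k^{p-1}\to1$, the family is uniformly bounded in $L^\infty(\partial\Omega,\lambda)$ and one extracts a $*$-weak limit $\beta_k$ with $\|\beta_k\chi_{\{\lambda>0\}}\|_\infty\le1$ directly. Both routes are valid; yours is more uniform with the scheme of Lemma \ref{teorema1} but does extra work, while the paper's exploits that truncation makes the boundary nonlinearity trivially bounded, no integral estimate needed.
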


\begin{proof}
	For each $k>0$, we take $T_k(u_p)$ as test function in \eqref{pbmain}, it yields
	\[\int_{\Omega} |\nabla T_k(u_p)|^pdx+\int_{\partial\Omega}\lambda |u_p|^{p-1}|T_k(u_p)|\, d\mathcal H^{N-1}=\int_\Omega fT_k(u_p)\, dx+\int_{\partial\Omega}gT_k(u_p)\, d\mathcal H^{N-1},\]
	from where we get the estimate
	\begin{equation*}\label{est1}
		\int_{\Omega} |\nabla T_k(u_p)|^pdx\le k\left(\int_\Omega |f|\, dx+\int_{\partial\Omega}|g|\, d\mathcal H^{N-1}\right).
	\end{equation*}
	Given $s<\frac{p}{p-1}$, H\"older's inequality implies
	\begin{equation}\label{est2}
		\begin{aligned}
		\left[\int_\Omega |\nabla u_p|^{(p-1)s}\chi_{\{|u_p|<k\}}\, dx\right]^{\frac1s}
		&\le \left[\int_\Omega |\nabla u_p|^{p}\chi_{\{|u_p|<k\}}\, dx\right]^{\frac{p-1}p}|\Omega|^{\frac1s-\frac{p-1}p}\\
		&\le k^{\frac{p-1}p}\left(\int_\Omega |f|\, dx+\int_{\partial\Omega}|g|\, d\mathcal H^{N-1}\right)^{\frac{p-1}p}|\Omega|^{\frac1s-\frac{p-1}p}.
		\end{aligned}	
\end{equation}
	Hence, the family $|\nabla u_p|^{p-2}\nabla u_p\chi_{\{|u_p|<k\}}$ is bounded in $L^s(\Omega; \R^N)$ for all $s\in (1,\infty)$. By the same procedure used in Lemma \ref{teorema1}, there exist $\z_k\in L^s(\Omega; \R^N)$ and a subsequence (not relabeled) such that
	\[|\nabla u_p|^{p-2}\nabla u_p\chi_{\{|u_p|<k\}} \rightharpoonup \z_k\]
	for all $s\in (1,\infty)$. Going back to \eqref{est2} and letting $p$ go to 1, the lower semicontinuity of the $s$--norm with respect to the weak convergence gives
	\[\left[\int_\Omega |\z_k|^s\, dx\right]^{\frac1s}\le |\Omega|^{\frac1s}\]
	for all $s\in (1,\infty)$. Therefore, $\z_k\in L^\infty(\Omega; \R^N)$ and $\|\z_k\|_\infty\le 1$.
	
	On the other hand, it follows from $|u_p|^{p-1}\chi_{\{|u_p|<k\}}\le k^{p-1}$ that, up to subsequences,
	\[|u_p|^{p-2}u_p\chi_{\{|u_p|<k\}} {\buildrel * \over \rightharpoonup} \beta_k\qquad \hbox{*-weakly in } L^\infty(\partial\Omega,\lambda)\]
	for certain $\beta_k\in  L^\infty(\partial\Omega,\lambda)$ that satisfies $\|\beta_k\chi_{\{\lambda>0\}}\|_\infty\le1$.
\end{proof}

Here we deal with the case $M(f,g,\lambda)> 1$; in particular we show that $u_p$ blows up on a set of positive measure.

\begin{lemma}\label{lemblowup}
	Under the assumptions of Lemma \ref{lemmastimafond}, let $u_p$ be the solution to problem \eqref{pbmain}. If $M(f,g,\lambda)>1$, then $u_p$ converges almost everywhere in $\Omega$ as $p\to 1^+$ to a function $u$ such that $|u|=+\infty$ either on a subset of $\Omega$ of positive Lebesgue measure or on a subset of $\partial\Omega$ of positive $\mathcal H^{N-1}$ measure.
	As a consequence, $u\notin BV(\Omega)$.
\end{lemma}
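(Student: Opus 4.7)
The plan is to argue by contradiction: I first construct the a.e. limit $u$ of $u_p$ in $\Omega$ together with a suitable boundary analogue $\hat u$, and then show that if $|u|$ is a.e. finite both in $\Omega$ and on $\partial\Omega$ then necessarily $M(f,g,\lambda)\le 1$, which contradicts the hypothesis. In particular, the consequence $u\notin BV(\Omega)$ will then follow because any $BV$-function is a.e. finite and admits an $L^1$-trace on $\partial\Omega$.

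To construct $u$, I would test \eqref{pbmain} with $T_k(u_p)$ as in Lemma \ref{teorema2}; this yields $\int_\Omega|\nabla T_k(u_p)|^p\,dx\le Ck$ and, using the elementary inequality $|T_k(u_p)|^p\le|T_k(u_p)|\,|u_p|^{p-1}$ on $\partial\Omega$, a uniform (in $p$) control on $\int_{\partial\Omega}\lambda|T_k(u_p)|^p\,d\mathcal H^{N-1}$ as well. By H\"older's inequality, $\|T_k(u_p)\|_\lambda$ is bounded in $p$ for each $k$. The compact embedding $BV(\Omega)\hookrightarrow L^1(\Omega)$ together with a diagonal procedure in $k$ then gives, up to a subsequence, $T_k(u_p)\to w_k$ a.e. in $\Omega$ for every $k\in\mathbb{N}$. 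The consistency $T_k(w_m)=w_k$ for $k\le m$ allows me to define the monotone pointwise limit $u(x):=\lim_k w_k(x)\in[-\infty,+\infty]$, and a direct check (at any $x$ with $|u(x)|<k$, the convergence $T_k(u_p(x))\to u(x)$ forces $|u_p(x)|<k$ eventually, so $u_p(x)=T_k(u_p(x))\to u(x)$) gives $u_p\to u$ a.e. in $\Omega$. An analogous construction on $\partial\Omega$, based on the weak-$*$ $L^\infty$-compactness of the traces $T_k(u_p)|_{\partial\Omega}$ and on their monotonicity in $k$, produces $\hat u$.

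Assume now, for contradiction, that $|u|<\infty$ a.e. in $\Omega$ and $|\hat u|<\infty$ a.e. on $\partial\Omega$. By \eqref{ide1} it is enough to prove $\|\z\|_\infty\le 1$ and $\|\beta\chi_{\{\lambda>0\}}\|_\infty\le 1$. The key observation is that $\z-\z_k$ is the weak $L^s$-limit of $|\nabla u_p|^{p-2}\nabla u_p\,\chi_{\{|u_p|\ge k\}}$; splitting by H\"older with an exponent $\sigma>1$ and using the uniform bound \eqref{stima1} I would get
\[
\int_\Omega|\nabla u_p|^{(p-1)s}\chi_{\{|u_p|\ge k\}}\,dx\le C\,|\{|u_p|\ge k\}|^{1/\sigma'}.
\]
Since $u_p\to u$ a.e. and $|u|<\infty$ a.e., dominated convergence yields $|\{|u_p|\ge k\}|\to|\{|u|\ge k\}|$ for a.e. $k$, and the latter tends to $0$ as $k\to\infty$. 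Weak lower semicontinuity of the $L^s$-norm then forces $\|\z-\z_k\|_s\to 0$, whence $\z_k\to\z$ a.e. along a subsequence; combined with $\|\z_k\|_\infty\le 1$ from Lemma \ref{teorema2} this gives $\|\z\|_\infty\le 1$. The analogous scheme on $\partial\Omega$ (with $dx$ replaced by $\lambda\,d\mathcal H^{N-1}$ and $|\nabla u_p|$ by $|u_p|$) delivers $\|\beta\chi_{\{\lambda>0\}}\|_\infty\le 1$, producing the desired contradiction.

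The principal obstacle is the boundary step. Since $BV$-traces are not continuous under weak-$*$ convergence in $BV(\Omega)$, the hypothesis ``$u$ finite a.e. on $\partial\Omega$'' has to be formulated intrinsically in terms of the $L^\infty$-weak-$*$ limits of $T_k(u_p)|_{\partial\Omega}$, and the required vanishing $\int_{\{|u_p|\ge k\}\cap\partial\Omega}\lambda\,d\mathcal H^{N-1}\to 0$ as $k\to\infty$ must be extracted from the finiteness of $\hat u$; I expect this to need a careful use of the monotone structure of $\{T_k(u_p)\}_k$ together with the uniform $L^\infty$-bound $\|T_k(u_p)|_{\partial\Omega}\|_\infty\le k$.
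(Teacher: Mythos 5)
Your strategy coincides with the paper's: both arguments rest on the truncation bounds of Lemma \ref{teorema2} (giving $\|\z_k\|_\infty\le1$ and $\|\beta_k\chi_{\{\lambda>0\}}\|_\infty\le1$ uniformly in $k$), on the a.e.\ limit $u$ constructed from the $T_k(u_p)$ as in \cite{mst2}, and on the identity \eqref{ide1} to show that $M(f,g,\lambda)>1$ is incompatible with $|u|$ being finite a.e.\ both in $\Omega$ and on $\partial\Omega$; your contradiction formulation is just the paper's direct statement read backwards. The one technical divergence is how you pass from $\z_k$ to $\z$: you estimate the tail $|\nabla u_p|^{p-1}\chi_{\{|u_p|\ge k\}}$ in $L^s$ by H\"older against $|\{|u_p|\ge k\}|^{1/\sigma'}$ (legitimate, since \eqref{stima1} holds with constant $M\Lambda^{1/s}$ regardless of the size of $M$) and send $k\to\infty$, whereas the paper multiplies the weak $L^s$-convergence of $|\nabla u_p|^{p-2}\nabla u_p$ by the strong $L^r$-convergence of $\chi_{\{|u_p|<k\}}$ (valid for all but countably many $k$) to identify $\z_k=\z\chi_{\{|u|<k\}}$ outright; both devices are correct and yield the same conclusion. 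As for the boundary obstacle you flag: the paper resolves it by exactly the identification you anticipate, namely $\beta_k\chi_{\{\lambda>0\}}=\beta\chi_{\{\lambda>0\}\cap\{|u|<k\}}$, taking the level sets of the limits of the traces of the $T_k(u_p)$ as the intrinsic meaning of finiteness on $\partial\Omega$; once that convention is adopted, your $L^s(\partial\Omega,\lambda)$ tail estimate closes the boundary case in strict parallel with the interior one, so the step you leave open requires no new idea beyond what you have already written.
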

\begin{proof}
Firstly one can show that $u_p$ converges almost everywhere in $\Omega$ to a function $u$ as $p\to 1^+$ using arguments similar to the ones of Step 2 of \cite{mst2}.
It follows from the pointwise convergence $u_p\to u$ as $p\to 1^+$ that
\[\chi_{\{|u_p|<k\}}\to \chi_{\{|u|<k\}}\qquad\hbox{strongly in } L^r(\Omega)\ \forall r\in(1,\infty)\]
up to a countable set of $k>0$. So, for almost all $k>0$, it follows from Lemmas \ref{teorema1} and \ref{teorema2} that we have
\[\z_k=\z\chi_{\{|u|<k\}}\qquad \hbox{and}\qquad \beta_k \chi_{\{\lambda>0\}}=\beta \chi_{\{\{\lambda>0\} \cap \{|u|<k\}\}}\]
Thus,
conditions $\|\z_k\|_\infty\le1$ and $\|\beta_k\chi_{\{\lambda>0\}}\|_\infty\le1$ for all $k>0$ imply
\[\|\z\chi_{\{|u|<\infty\}}\|_\infty\le 1\qquad \hbox{and}\qquad \|\beta \chi_{\{\{\lambda>0\}\cap\{|u|<\infty\}\}}\|_\infty\le1.\]
Having in mind \eqref{ide1}, the result follows.
\end{proof}
Finally we can gather the previous results to give the proof of the main result of the current section.
\begin{proof}[Proof of Theorem \ref{mainteo}]
	The proof follows from Lemmas \ref{lemmastimafond} and \ref{lemblowup}.
\end{proof}

\section{The limit problem}
\label{sec:limitproblem}
Here we are interested into the study of the limit problem for \eqref{pbmain} as $p\to 1^+$. In particular we first deal with the case with $\Omega$ regular enough. Later and under some assumptions on the data, we treat the case where $\Omega$ has Lipschitz boundary.  
\\
Thus we are studying the existence of a solution to
\begin{equation}
	\label{pb1}
	\begin{cases}
		\displaystyle -\Delta_1 u = f & \text{ in }\Omega,\\
		\displaystyle [\z,\nu] +\lambda \sg(u) = g & \text{ on } \partial\Omega.
	\end{cases}
\end{equation}
Let us stress that the sign function needs to be intended as a multivalued function which is  $\sg(u)=[-1,1]$ when $u=0$. 
Then let us specify the notion of solution we adopt for problem \eqref{pb1}.

\begin{defin}\label{def}
	A function $u\in BV(\Omega)$ is a solution to \eqref{pb1} if there exists $\z\in L^\infty(\Omega,\R^N)$ with $||\z||_{\infty}\le 1$ such that
	\begin{align}
		&-\operatorname{div}\z = f \ \ \text{as measures in }\Omega, \label{def_distrp=1}
		\\
		&(\z,Du)=|Du| \label{def_zp=1} \ \ \ \ \text{as measures in } \Omega,
		\\
		&	[\z,\nu]+\lambda \beta=g\label{def_bordop=1}\ \ \ \text{for  $\mathcal{H}^{N-1}$-a.e. } x \in \partial\Omega,
	\end{align}
\end{defin}
where $\beta$ is a measurable function such that $\|\beta \chi_{\{\lambda>0\}}\|_{\infty} \le 1$ 
and
\begin{equation}\label{def_bordo2p=1} (\lambda\beta -g) \in   T_1(\lambda \sg(u)-g)  \ \ \ \text{for  $\mathcal{H}^{N-1}$-a.e. } x \in \partial\Omega.
\end{equation}
\begin{remark}\label{remdef}
	The notion of solution given by Definition \ref{def} is nowadays classical in the context of $1$-Laplace operator. Equation \eqref{def_zp=1} is how $\z$ plays the role of the quotient $|D u|^{-1}D u$, which, jointly with \eqref{def_distrp=1}, formally represents the equation in problem \eqref{pb1}. Equations \eqref{def_bordop=1} and \eqref{def_bordo2p=1} deserve a particular attention. It is clear that if $|\lambda \sg(u)-g|\le 1$ then \eqref{def_bordo2p=1} means $\beta \in  \sg(u)$ in $\{\lambda>0\}$ which is what one clearly expect as for the boundary equation in \eqref{pb1}. Otherwise, if $|\lambda \sg(u)-g| > 1$, then \eqref{def_bordo2p=1} in \eqref{def_bordop=1}  simply means that $|[\z,\nu]|$ is forced to be highest possible.
\end{remark}

\subsection{The case $\de\Omega\in C^{1}$}

In this section $\Omega$ is a  bounded open set of $\R^{N}$ with $C^{1}$ boundary. 

\medskip

The main result of this section is the following:

\begin{theorem}\label{existence}
Let $f\in L^{N,\infty}(\Omega)$, $g\in L^\infty(\partial\Omega)$ and let $\lambda \in L^{\infty}(\partial\Omega)$ be nonnegative but not identically null. If $M(f,g,\lambda)\le 1$, then there exists a solution to \eqref{pb1}.
\end{theorem}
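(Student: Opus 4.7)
\emph{Plan.} The strategy I would use is approximation. Take $u_p\in W^{1,p}(\Omega)$ solving \eqref{pbmain}. By Lemma \ref{lemmastimafond} and the hypothesis $M(f,g,\lambda)\le 1$, the family $\{u_p\}$ is bounded in $BV(\Omega)$, and up to a subsequence it converges $*$-weakly in $BV(\Omega)$ and a.e.\ in $\Omega$ to some $u\in BV(\Omega)$. Lemma \ref{teorema1} produces $\z\in L^\infty(\Omega;\R^N)$ and a function $\beta$ with $\beta\chi_{\{\lambda>0\}}\in L^\infty(\partial\Omega)$ satisfying $\|\z\|_\infty,\|\beta\chi_{\{\lambda>0\}}\|_\infty\le M(f,g,\lambda)\le 1$, together with $-\operatorname{div}\z=f$ in $\mathcal D'(\Omega)$ and $[\z,\nu]+\lambda\beta=g$ $\mathcal H^{N-1}$-a.e.\ on $\partial\Omega$. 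This already secures \eqref{def_distrp=1}, \eqref{def_bordop=1} and the integrability bounds required by Definition \ref{def}.

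The core remaining task is to verify $(\z,Du)=|Du|$ as measures. I would test \eqref{pbmain} with $u_p$ itself to obtain the energy identity
\[
\int_\Omega|\nabla u_p|^p\,dx+\int_{\partial\Omega}\lambda|u_p|^p\,d\mathcal H^{N-1}=\int_\Omega fu_p\,dx+\int_{\partial\Omega}gu_p\,d\mathcal H^{N-1}.
\]
Applying Young's inequality $|s|^p\ge p|s|-(p-1)$ on both left-hand terms and passing to $\liminf_{p\to 1^+}$, I invoke (i) the standard $BV$ lower semicontinuity of the total variation for the volume gradient and (ii) the dedicated boundary lower semicontinuity result Lemma \ref{low-sem1} for $v\mapsto\int_{\partial\Omega}\lambda|v|\,d\mathcal H^{N-1}$. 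The right-hand side passes to the limit via the duality between $L^{N,\infty}(\Omega)$ and $L^{\frac{N}{N-1},1}(\Omega)$ (combined with $BV(\Omega)\hookrightarrow L^{\frac{N}{N-1},1}(\Omega)$) and via weak $L^1(\partial\Omega)$-compactness of traces paired against $g\in L^\infty(\partial\Omega)$. This yields the energy inequality
\[
\int_\Omega|Du|+\int_{\partial\Omega}\lambda|u|\,d\mathcal H^{N-1}\le\int_\Omega fu\,dx+\int_{\partial\Omega}gu\,d\mathcal H^{N-1}.
\]

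Next I apply the Anzellotti--Chen--Frid Green formula to $(\z,u)$ and substitute $-\operatorname{div}\z=f$ and $[\z,\nu]=g-\lambda\beta$ to obtain
\[
\int_\Omega(\z,Du)=\int_\Omega fu\,dx+\int_{\partial\Omega}u(g-\lambda\beta)\,d\mathcal H^{N-1}.
\]
Subtracting the energy inequality gives
\[
\int_\Omega(\z,Du)-\int_\Omega|Du|\;\ge\;\int_{\partial\Omega}\lambda(|u|-\beta u)\,d\mathcal H^{N-1}.
\]
The left-hand side is $\le 0$ because $\|\z\|_\infty\le 1$ forces $(\z,Du)\le|Du|$ as measures; the right-hand side is $\ge 0$ because $\|\beta\chi_{\{\lambda>0\}}\|_\infty\le 1$ forces $\beta u\le|u|$ on $\{\lambda>0\}$. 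Therefore both sides vanish. This simultaneously establishes \eqref{def_zp=1} and the pointwise identity $\beta u=|u|$ $\mathcal H^{N-1}$-a.e.\ on $\{\lambda>0\}$, so in particular $\beta=\sg(u)$ on $\{\lambda>0\}\cap\{u\ne 0\}$.

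Finally, I verify \eqref{def_bordo2p=1} by case analysis: on $\{\lambda>0,\,u\ne 0\}$, $\lambda\beta-g=\lambda\sg(u)-g=-[\z,\nu]$ and $|[\z,\nu]|\le 1$ give both $|\lambda\sg(u)-g|\le 1$ and $\lambda\beta-g\in T_1(\lambda\sg(u)-g)$; on $\{\lambda>0,\,u=0\}$, the set $\lambda\sg(u)-g$ is the interval $[-\lambda-g,\lambda-g]$ and $\lambda\beta-g=-[\z,\nu]$ lies in this interval with absolute value $\le 1$, hence in $T_1(\lambda\sg(u)-g)$; on $\{\lambda=0\}$, the identity $[\z,\nu]=g$ combined with $\|\z\|_\infty\le 1$ forces $|g|\le 1$, so $-g=T_1(-g)$ trivially. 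The hard part is Lemma \ref{low-sem1}: under $*$-weak $BV(\Omega)$ convergence the traces $u_p|_{\partial\Omega}$ are only weakly $L^1(\partial\Omega)$-compact and do not in general converge strongly, so the required lower semicontinuity of $v\mapsto\int_{\partial\Omega}\lambda|v|\,d\mathcal H^{N-1}$ is not a consequence of generic principles and is the key analytic input that the authors must isolate.
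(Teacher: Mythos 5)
Your overall architecture (approximation via \eqref{pbmain}, the compactness from Lemma \ref{lemmastimafond}, and the objects $\z,\beta$ from Lemma \ref{teorema1}) matches the paper, and your idea of extracting both $(\z,Du)=|Du|$ and $\beta u=|u|$ on $\{\lambda>0\}$ from a single global energy inequality plus Green's formula is a legitimate alternative to the paper's two separate lemmas. But the pivotal step — the passage to the limit that yields
\begin{equation*}
\int_\Omega|Du|+\int_{\partial\Omega}\lambda|u|\,d\mathcal H^{N-1}\le\int_\Omega fu\,dx+\int_{\partial\Omega}gu\,d\mathcal H^{N-1}
\end{equation*}
— is not justified, and this is a genuine gap. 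First, Lemma \ref{low-sem1} does not apply to the weight $\psi=\lambda$: it requires $0\le\psi\le1$, whereas here only $\lambda\in L^\infty(\partial\Omega)$, $\lambda\ge0$ is assumed. This restriction is not technical: the functional $v\mapsto\int_\Omega|Dv|+\int_{\partial\Omega}\lambda|v|\,d\mathcal H^{N-1}$ is \emph{not} lower semicontinuous for the $L^1(\Omega)$-convergence when $\lambda>1$ (its relaxation carries the weight $\min\{\lambda,1\}$ on the boundary, exactly as in the $\Gamma$-convergence statement of Section \ref{sec:variational}). Note also that the lemma gives semicontinuity only of the \emph{combined} volume-plus-boundary functional, never of the boundary term alone, so you cannot split the two liminfs as you propose. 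Second, the right-hand term $\int_{\partial\Omega}gu_p\,d\mathcal H^{N-1}$ cannot be passed to the limit separately: under $*$-weak $BV$ convergence the traces $u_p\big|_{\partial\Omega}$ are merely bounded in $L^1(\partial\Omega)$ (not weakly compact there in general), and even when a weak limit exists it need not coincide with $u\big|_{\partial\Omega}$, since the trace operator is not continuous for this convergence.

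The paper's proof is built precisely to dodge these two obstructions. It first proves (Lemma \ref{desug1}) that $M\le1$ forces $|g|\le\lambda+1$, whence $\lambda|r|-gr\ge T_1(\lambda\sg(r)-g)\,r$; the boundary terms $\lambda|u_p|-gu_p$ are then kept \emph{together} and bounded below by $T_1(\lambda\sg(u_p)-g)\,u_p$, whose integrand is $1$-Lipschitz in $u$. Only then is lower semicontinuity invoked — Modica's result (requiring $\partial\Omega\in C^1$) for the combined functional $\int_\Omega|Dv|+\int_{\partial\Omega}T_1(\lambda\sg v-g)v$, with Lemma \ref{low-sem1}/Proposition \ref{low-sem2} serving only as its replacement in the Lipschitz case under the extra hypothesis $|g|\le\lambda$. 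Moreover the paper works with $T_k(u_p)$ rather than $u_p$ (bounded traces, dominated convergence against $f$) and proves $(\z,Du)=|Du|$ by a separate, purely interior argument with test functions $T_k(u_p)\varphi$, $\varphi\in C^1_c(\Omega)$, which involves no boundary limit at all. To repair your proof you would need to replace your energy inequality by the truncated one, i.e.\ \eqref{bordo2}, which is exactly the content of Lemma \ref{lemmabordo}.
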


\begin{remark}
	One can wonder if the solution found in Theorem \ref{existence} is actually the unique one. In the context of the $1$-Laplace operator this is often a  delicate issue. Let us stress that indeed for problem \ref{pb1} one can not expect uniqueness of solutions in the sense of Definition \ref{def}. Indeed, let $F$ be an increasing function such that $F(0)=0$.  It is now simple to convince that if $u$ is a solution to \eqref{pb1} then $F(u)$ is a solution itself to the same problem.
\end{remark}

Clearly we will prove Theorem \ref{existence} by means of approximation through problems \eqref{pbmain} and using the information already gained on $u_p$.
Henceforth $\z$ and $\beta$ are the ones found in Lemmas \ref{teorema1} and \ref{teorema2} respectively.

\medskip

Hence we just need to show the identification of both $\z$ and $\beta$ by proving \eqref{def_zp=1} and \eqref{def_bordo2p=1}.

\medskip

We start by proving the identification of $\beta$; we first show that the assumption on $M$ can be read as an assumption connecting  $\lambda$ and $g$ in an explicit way.

\begin{lemma}\label{desug1}
	Under the assumptions of Theorem \ref{existence} let $u_p$ be a solution of \eqref{pbmain}. If $M(f,g,\lambda)\le 1$, then $|g|\le\lambda+1$. As a consequence,
	\begin{equation}\label{des}
	T_1(\lambda \sg (r)-g)r\le \lambda|r|-gr
	\end{equation}
\end{lemma}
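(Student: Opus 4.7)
The plan is to deduce the first claim directly from the limiting vector field identification of Lemma~\ref{teorema1}, and then obtain the second claim by a short case analysis on the sign of $r$.

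For the first inequality, I would apply Lemma~\ref{teorema1} to the family $u_p$: it yields $\z\in L^\infty(\Omega;\R^N)$ and $\beta$ with $\beta\chi_{\{\lambda>0\}}\in L^\infty(\partial\Omega)$ such that $g=[\z,\nu]+\lambda\beta$ holds $\mathcal H^{N-1}$-a.e.\ on $\partial\Omega$, together with the norm identity \eqref{ide1}. Since $M(f,g,\lambda)\le 1$, that identity gives $\|\z\|_\infty\le 1$ and $\|\beta\chi_{\{\lambda>0\}}\|_\infty\le 1$; recalling $|[\z,\nu]|\le\|\z\|_\infty$ a.e.\ on $\partial\Omega$, on the set $\{\lambda>0\}$ one has $|g|\le|[\z,\nu]|+\lambda|\beta|\le 1+\lambda$, while on $\{\lambda=0\}$ one has $g=[\z,\nu]$, hence $|g|\le 1=\lambda+1$. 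Combining the two cases yields $|g|\le\lambda+1$ $\mathcal H^{N-1}$-a.e.

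For the inequality \eqref{des}, I would fix $r\in\R$ and examine the three cases. If $r>0$, then $\sg(r)=1$ and the inequality reduces, after dividing by $r$, to $T_1(\lambda-g)\le\lambda-g$. The bound just obtained gives $\lambda-g\ge -1$, so the truncation at level $-1$ is inactive and $T_1(\lambda-g)=\min(\lambda-g,1)\le\lambda-g$. Symmetrically, if $r<0$, then $\sg(r)=-1$ and, dividing by $r<0$, the inequality becomes $T_1(-\lambda-g)\ge -\lambda-g$; here the other half of the estimate $|g|\le\lambda+1$ gives $-\lambda-g\le 1$, so the upper truncation is inactive and $T_1(-\lambda-g)=\max(-1,-\lambda-g)\ge -\lambda-g$. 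Finally, if $r=0$, both sides of \eqref{des} vanish.

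The only delicate point is the first step, where one must quote the correct fact from Lemma~\ref{teorema1} (namely, that \eqref{ide1} upgrades $M\le 1$ to the pointwise bounds $\|\z\|_\infty\le 1$ and $\|\beta\chi_{\{\lambda>0\}}\|_\infty\le 1$); after that, the whole lemma is an elementary triangle-inequality/case-check, so no serious obstacle is expected.
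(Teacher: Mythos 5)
Your argument is correct and follows essentially the same route as the paper: both deduce $\|\z\|_\infty\le 1$ and $\|\beta\chi_{\{\lambda>0\}}\|_\infty\le 1$ from \eqref{ide1}, combine this with $[\z,\nu]+\lambda\beta=g$ and $|[\z,\nu]|\le\|\z\|_\infty$ to get $|g|\le\lambda+1$, and then verify \eqref{des} by the same sign-of-$r$ case analysis (the paper handles the two signs of $g\mp\lambda$ in one line rather than splitting into $\{\lambda>0\}$ and $\{\lambda=0\}$, but this is a cosmetic difference). No gaps.
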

holds  for all $r\in\R$.
\begin{proof}
	It follows from Lemma \ref{teorema1} that if $M(f,g,\lambda)\le 1$ then $\|\z\|_\infty\le1$ and $\|\beta\chi_{\{\lambda>0\}}\|_\infty\le1$, so that $-1\le [\z,\nu]\le1$ and $-1\le \beta\chi_{\{\lambda>0\}}\le1$. These facts and the identity $[\z,\nu]+\lambda\beta=g$ yield the desired inequality. Indeed,
	\[\lambda-g\ge\lambda\beta-g=-[\z,\nu]\ge-1\]
	\[-\lambda-g\le \lambda\beta-g=-[\z,\nu]\le1\]
	wherewith $g\le\lambda+1$ and $-g\le\lambda+1$ hold.
	
	\medskip
	
	It is enough to analyze two possibilities since \eqref{des} trivially holds when $r=0$.
	
	If $r>0$, since we have already proven $-1\le \lambda -g$, then $T_1(\lambda -g)r\le \lambda r-gr$.
	
	If $r<0$, since one has $-1\le \lambda +g$ , then
	$-T_1(\lambda +g)r\le -\lambda r-gr$, that is $T_1(-\lambda -g)r\le \lambda|r|-gr$.
\end{proof}

The previous lemma allows us to prove the following result.
\begin{lemma}\label{lemmabordo}
		Under the assumptions of Theorem \ref{existence} let $u_p$ be a solution of \eqref{pbmain} and let $\z$ and $\beta$  be the vector field and the function found in Lemma \ref{teorema1}. Then it holds
		\begin{equation*}
			u([\z,\nu]+ T_1(\lambda\sg u-g))= 0\qquad\mathcal H^{N-1}\hbox{--a.e. on }\partial\Omega.
		\end{equation*}
	In particular it holds \eqref{def_bordo2p=1}.
\end{lemma}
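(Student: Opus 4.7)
The plan is to establish an energy inequality at the limit $p\to 1^+$ that, combined with Green's formula and the pointwise bounds on $\z$ and $\beta$ from Lemma \ref{teorema1}, forces the desired pointwise identification. Taking $u_p$ itself as test function in \eqref{pbmain} gives the identity
\[
\int_\Omega |\nabla u_p|^p\, dx + \int_{\partial\Omega}\lambda|u_p|^p\, d\mathcal H^{N-1} = \int_\Omega fu_p\, dx + \int_{\partial\Omega} g u_p\, d\mathcal H^{N-1},
\]
and applying the elementary inequality $a^p\ge pa-(p-1)$ (valid for $a\ge 0$, $p>1$) pointwise to $|\nabla u_p|$ and to $|u_p|$ (the latter weighted by $\lambda$) leads, after dividing by $p$, to
\[
\int_\Omega |\nabla u_p|\, dx + \int_{\partial\Omega}\lambda|u_p|\, d\mathcal H^{N-1}\le \frac{1}{p}\!\left(\int_\Omega fu_p\, dx + \int_{\partial\Omega}gu_p\, d\mathcal H^{N-1}\right) + \frac{p-1}{p}\Lambda,
\]
with $\Lambda = |\Omega| + \int_{\partial\Omega}\lambda\, d\mathcal H^{N-1}$.

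The main obstacle is passing to the limit. The right-hand side should converge to $\int_\Omega fu + \int_{\partial\Omega}gu$: in $\Omega$ via the compact $BV\hookrightarrow L^1$ embedding coupled with the Lorentz--Marcinkiewicz duality $L^{N/(N-1),1}$--$L^{N,\infty}$, and on $\partial\Omega$ via the $C^1$-regularity together with the uniform $W^{1,1}$-bound on the sequence; the left-hand side must then be controlled from below by $\int_\Omega|Du|+\int_{\partial\Omega}\lambda|u|\, d\mathcal H^{N-1}$ through lower semicontinuity of the weighted functional $v\mapsto \int_\Omega|Dv|+\int_{\partial\Omega}\lambda|v|\,d\mathcal H^{N-1}$ under weak-$*$ convergence in $BV$ -- this is the genuinely delicate ingredient and is presumably the content of the forthcoming Lemma \ref{low-sem1}. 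Once the limiting inequality $\int_\Omega|Du|+\int_{\partial\Omega}\lambda|u|\le \int_\Omega fu+\int_{\partial\Omega} gu$ is in hand, I invoke Green's formula for $\z$ and $u$, using $-\Div\z=f$ to write $\int_\Omega fu=\int_\Omega(\z, Du)-\int_{\partial\Omega}u[\z,\nu]\, d\mathcal H^{N-1}$, and substitute $g-[\z,\nu]=\lambda\beta$ from \eqref{ide3} to rearrange the estimate into
\[
\int_\Omega (\z, Du) + \int_{\partial\Omega} u\lambda\beta\, d\mathcal H^{N-1} \ge \int_\Omega|Du| + \int_{\partial\Omega}\lambda|u|\, d\mathcal H^{N-1}.
\]
The bounds $\|\z\|_\infty\le 1$ and $\|\beta\chi_{\{\lambda>0\}}\|_\infty\le 1$ from Lemma \ref{teorema1} give the reverse pointwise inequalities $(\z, Du)\le|Du|$ as measures and $u\lambda\beta\le \lambda|u|$ $\mathcal H^{N-1}$-a.e., so equality must hold in both. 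In particular $u\lambda\beta=\lambda|u|$ $\mathcal H^{N-1}$-a.e., which identifies $\beta\in\sg(u)$ on $\{\lambda>0\}\cap\{u\ne 0\}$.

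With this identification at hand, the pointwise claim follows. On $\{\lambda>0\}\cap\{u\ne 0\}$, \eqref{ide3} rewrites as $\lambda\sg(u)-g=-[\z,\nu]\in[-1,1]$, hence $T_1(\lambda\sg(u)-g)=-[\z,\nu]$. On $\{\lambda=0\}\cap\{u\ne 0\}$, Lemma \ref{desug1} yields $|g|\le 1$ while \eqref{ide3} reduces to $[\z,\nu]=g$, so $T_1(\lambda\sg(u)-g)=T_1(-g)=-g=-[\z,\nu]$. In both cases $[\z,\nu]+T_1(\lambda\sg(u)-g)=0$, and the product with $u$ vanishes trivially on $\{u=0\}$, proving the displayed identity. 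For \eqref{def_bordo2p=1}, on $\{u\ne 0\}$ the same computation rewrites as $\lambda\beta-g=T_1(\lambda\sg(u)-g)$; on $\{u=0\}$, the inclusion $\lambda\beta-g=-[\z,\nu]\in[-1,1]\cap[-\lambda-g,\lambda-g]=T_1(\lambda[-1,1]-g)$ is automatic from the bounds on $\beta$ and $[\z,\nu]$.
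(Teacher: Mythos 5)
Your overall architecture (limit energy inequality, then Green's formula plus the pointwise bounds on $\z$ and $\beta$ to squeeze out equality) matches the paper's, and your endgame is sound --- it would even yield $(\z,Du)=|Du|$ and $\beta\in\sg(u)$ on $\{\lambda>0\}\cap\{u\neq0\}$ in one stroke. But the limit passage $p\to1^+$ is broken in three places, and these are precisely the obstructions the paper's proof is engineered around. First, $\int_\Omega fu_p\,dx\to\int_\Omega fu\,dx$ is not justified: $f$ is only in $L^{N,\infty}(\Omega)$, the embedding $BV(\Omega)\hookrightarrow L^{\frac{N}{N-1},1}(\Omega)$ is continuous but not compact, and bounded functions are not dense in $L^{N,\infty}(\Omega)$, so the usual splitting argument fails. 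The paper tests with $T_k(u_p)$ instead of $u_p$ precisely so that $|fT_k(u_p)|\le k|f|$ and dominated convergence applies. Second, $\int_{\partial\Omega}gu_p\,d\mathcal H^{N-1}\to\int_{\partial\Omega}gu\,d\mathcal H^{N-1}$ is false in general: the trace operator on $BV(\Omega)$ is not continuous with respect to weak-$*$ (or $L^1(\Omega)$) convergence --- gradient mass can concentrate at the boundary --- and no regularity of $\partial\Omega$ repairs this. Third, the functional $v\mapsto\int_\Omega|Dv|+\int_{\partial\Omega}\lambda|v|\,d\mathcal H^{N-1}$ is \emph{not} lower semicontinuous for the $L^1$-convergence when $\lambda>1$ on a set of positive measure (its relaxation carries the weight $\min\{\lambda,1\}$; cf.\ the $\Gamma$-limit in Section \ref{sec:variational}), so Lemma \ref{low-sem1} cannot be invoked for it.

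The paper resolves the second and third issues simultaneously: it keeps the two boundary terms together, uses Lemma \ref{desug1} to bound $\lambda|T_k(u_p)|-gT_k(u_p)$ from below by $T_1(\lambda\sg(u_p)-g)T_k(u_p)$, whose weight lies in $[-1,1]$, and only then applies Modica's lower semicontinuity result to the combined interior-plus-boundary functional. That is the step your proposal is missing; without it the inequality $\int_\Omega|Du|+\int_{\partial\Omega}\lambda|u|\,d\mathcal H^{N-1}\le\int_\Omega fu\,dx+\int_{\partial\Omega}gu\,d\mathcal H^{N-1}$ on which everything else rests is not established.
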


\begin{proof}
	Let us take $T_{k}(u_p)$ as a test function in \eqref{pbmain} obtaining that
		$$
		\int_\Omega |\nabla T_{k}(u_p)|^{p}\, dx + \int_{\partial\Omega}\lambda |T_{k}(u_p)|^{p} d\mathcal H^{N-1} = \int_\Omega f T_{k}(u_p)\, dx + \int_{\partial\Omega}g T_{k}(u_p) d\mathcal H^{N-1},
		$$
		which, applying the Young inequality, implies that
		\begin{equation}\label{bordo0}
			\begin{aligned}
			&\int_\Omega |\nabla T_{k}(u_p)|\, dx + \int_{\partial\Omega}(\lambda|T_{k}(u_p)|-g T_{k}(u_p))  d\mathcal H^{N-1} \\ &\le  \int_\Omega f T_{k}(u_p)\, dx + \frac{p-1}{p}|\Omega| + \frac{p-1}{p}\int_{\partial\Omega} \lambda d\mathcal H^{N-1}.
			\end{aligned}
		\end{equation}
		Owing to Lemma \ref{desug1}, \eqref{bordo0} becomes
		\begin{equation}\label{bordo1}
			\begin{aligned}
			&\int_\Omega |\nabla T_{k}(u_p)|\, dx + \int_{\partial\Omega}T_1(\lambda\sg(u_p)-g) T_{k}(u_p)\,  d\mathcal H^{N-1} \\
			&\le  \int_\Omega f T_{k}(u_p)\, dx + \frac{p-1}{p}\left[|\Omega| + \int_{\partial\Omega} \lambda d\mathcal H^{N-1}\right].
			\end{aligned}
		\end{equation}
		Notice that the left hand side of \eqref{bordo1}  is lower semicontinuous with respect to the $L^1$-convergence as $p\to 1^+$ thanks to Proposition $1.2$ of \cite{modica}. Hence, taking $p\to 1^+$ in \eqref{bordo1}, one yields to
		\begin{equation}\label{bordo2}
			\int_\Omega |D T_{k}(u)| + \int_{\partial\Omega} T_1(\lambda\sg u-g) T_{k}(u)\, d\mathcal H^{N-1} \le  \int_\Omega f T_{k}(u)\, dx.
		\end{equation}
		Now since it follows from Lemma \ref{teorema1} that $-\Div\z = f$, from \eqref{bordo2} one deduces that
		\begin{equation*}
			\begin{aligned}
			\int_\Omega |D T_{k}(u)| + \int_{\partial\Omega} T_1(\lambda\sg u-g) T_{k}(u)\, d\mathcal H^{N-1} &\le  -\int_\Omega \Div\z\, T_{k}(u)
			\\
			&= \int_\Omega (\z, DT_{k}(u)) - \int_{\partial\Omega} T_{k}(u)[\z,\nu]\,d\mathcal H^{N-1},
			\end{aligned}
		\end{equation*}
		where the last equality follows from an application of the Green formula. Now observe that $(\z,DT_{k}(u)) \le |DT_{k}(u)|$ as measures since $\|\z\|_{\infty}\le 1$; then one gets
		\begin{equation}\label{bordo4}
			\int_{\partial\Omega} (T_1(\lambda\sg u-g) + [\z,\nu]) T_{k}(u)\, d\mathcal H^{N-1} \le 0.
		\end{equation}
		Now observe that $\left(T_1(\lambda\sg u-g) + [\z,\nu]\right)$ has the same sign of $u$ for $\mathcal H^{N-1}$--almost every point on $\partial\Omega$. Indeed, assume first that $x\in\partial\Omega$ satisfies $u(x)>0$. Then $\lambda(x)-g(x) \ge -1$ by Lemma \ref{desug1}. If $\lambda(x)-g(x)\le1$, then Lemma \ref{teorema1} gives $\lambda-g +[\z,\nu]=\lambda-g + g-\lambda \beta=\lambda-\lambda\beta \ge 0$ since $|\beta\chi_{\{\lambda>0\}}|\le 1$. Otherwise let  $x$ be such that $\lambda(x)-g(x) > 1$ then $1+[\z,\nu] \ge 0$ since $|[\z,\nu]|\le 1$. A similar argument holds when $u(x)<0$.
		
		Thus, \eqref{bordo4} implies that
		$(T_1(\lambda\sg u-g) + [\z,\nu]) u = 0$ $\mathcal H^{N-1}$--almost everywhere on $\partial \Omega$. Moreover since  $[\z,\nu] = g-\lambda \beta$ it follows \eqref{def_bordo2p=1}.
	\end{proof}

Now we focus on proving \eqref{def_zp=1}.

\begin{lemma}\label{lemmaidentificaz}
	Under the assumptions of Theorem \ref{existence} let $u_p$ be a solution of \eqref{pbmain} and let $\z$ be the vector field found in Lemma \ref{teorema1}. Then it holds
	\begin{equation*}
		(\z,Du)=|Du|  \ \ \ \ \text{as measures in } \Omega.
	\end{equation*}
\end{lemma}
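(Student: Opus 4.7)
The pointwise measure inequality $(\z, DT_k(u)) \le |DT_k(u)|$ follows from $\|\z\|_\infty \le 1$ and Anzellotti's pairing theory, so my task reduces to proving the reverse inequality at the level of total masses, for every $k > 0$. Most of the required groundwork has already been done in Lemma \ref{lemmabordo}; the plan is to combine inequality \eqref{bordo2} with the boundary-cancellation established there.

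Starting from \eqref{bordo2} and applying $-\Div\z = f$ from \eqref{ide2} together with the Green formula on the right-hand side, I obtain
$$\int_\Omega |DT_k(u)| + \int_{\partial\Omega} T_1(\lambda\sg u - g) T_k(u)\, d\mathcal H^{N-1} \le \int_\Omega (\z, DT_k(u)) - \int_{\partial\Omega} T_k(u)[\z,\nu]\, d\mathcal H^{N-1},$$
which I would rearrange as
$$\int_\Omega |DT_k(u)| + \int_{\partial\Omega}\bigl(T_1(\lambda\sg u - g) + [\z,\nu]\bigr) T_k(u)\, d\mathcal H^{N-1} \le \int_\Omega (\z, DT_k(u)).$$
Now Lemma \ref{lemmabordo} asserts $(T_1(\lambda\sg u - g) + [\z,\nu])u = 0$ $\mathcal H^{N-1}$-a.e.\ on $\partial\Omega$. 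Since $T_k(u)$ and $u$ share the same sign and zero set, this forces $(T_1(\lambda\sg u - g) + [\z,\nu]) T_k(u) = 0$ pointwise on $\partial\Omega$. The boundary integral therefore vanishes and I arrive at $\int_\Omega |DT_k(u)| \le \int_\Omega (\z, DT_k(u))$.

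Combining this with $(\z, DT_k(u)) \le |DT_k(u)|$ yields equality of total masses; since $|DT_k(u)| - (\z, DT_k(u))$ is a nonnegative Radon measure with zero total mass, it must be the zero measure, so $(\z, DT_k(u)) = |DT_k(u)|$ as measures in $\Omega$. To conclude, I would let $k \to \infty$ and invoke the standard BV chain-rule convergences $|DT_k(u)| \nearrow |Du|$ and $(\z, DT_k(u)) \to (\z, Du)$ as measures, valid because $u \in BV(\Omega)$ is a.e.\ finite; these pass the identity to the limit and give $(\z, Du) = |Du|$ as measures in $\Omega$. The main potential obstacle would be the cancellation of the two boundary integrals, but this is handled cleanly by Lemma \ref{lemmabordo}; all remaining steps are classical in the $p \to 1^+$ limit analysis.
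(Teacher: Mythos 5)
Your proof is correct, but it follows a genuinely different route from the paper's. The paper proves the identity \emph{locally}: it tests \eqref{pbmain} with $T_k(u_p)\varphi$ for an arbitrary nonnegative $\varphi\in C^1_c(\Omega)$, passes to the limit in $p$ and then in $k$, and obtains $\int_\Omega|Du|\varphi\le\int_\Omega(\z,Du)\varphi$ for every such $\varphi$, which gives $|Du|\le(\z,Du)$ as measures directly; since $\varphi$ has compact support, no boundary terms ever appear and Lemma \ref{lemmabordo} is not used. You instead work \emph{globally}: you reuse the inequality \eqref{bordo2} and the boundary cancellation $(T_1(\lambda\sg u-g)+[\z,\nu])u=0$ from Lemma \ref{lemmabordo} to obtain equality of the total masses $\int_\Omega|DT_k(u)|\le\int_\Omega(\z,DT_k(u))$, and then upgrade to equality of measures via the standard observation that a nonnegative measure with zero total mass vanishes. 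Both arguments are sound, and your steps (the sign/zero-set compatibility of $T_k(u)$ with $u$, the ordered-measures argument, and the passage $k\to\infty$ via the coarea formula and the distributional continuity of the pairing) all hold. What each approach buys: yours is economical because it recycles the computation already performed in Lemma \ref{lemmabordo}, but it thereby inherits that lemma's reliance on the up-to-the-boundary lower semicontinuity result (Modica's Proposition, requiring $C^1$ boundary, or Proposition \ref{low-sem2} in the Lipschitz case with $|g|\le\lambda$); the paper's interior argument is logically independent of the boundary regularity and of the boundary identification, which is why it transfers verbatim to the Lipschitz extension.
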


\begin{proof}
	Let us take $T_k(u_p)\varphi$ ($k> 0,\; 0\le \varphi\in C^1_c(\Omega)$) as a test function in \eqref{pbmain} yielding to
	\begin{equation*}
	\int_\Omega |\nabla T_k(u_p)|^p\varphi \, dx + \int_\Omega T_k(u_p) |\nabla u_p|^{p-2}\nabla u_p\cdot \nabla \varphi \, dx = \int_\Omega f T_k(u_p)\varphi\, dx,
	\end{equation*}
	which, from an application of the Young inequality, implies
	\begin{equation*}
	\int_\Omega |\nabla T_k(u_p)|\varphi \, dx + \int_\Omega T_k(u_p) |\nabla u_p|^{p-2}\nabla u_p\cdot \nabla \varphi \, dx \le \int_\Omega f T_k(u_p)\varphi\, dx + \frac{p-1}{p}\int_\Omega \varphi \, dx.
	\end{equation*}
	By taking $p\to 1^+$ in the previous inequality, one obtains that
	\begin{equation*}
		\int_\Omega |D T_k(u)|\varphi + \int_\Omega T_{k}(u) \z\cdot \nabla \varphi \, dx \le \int_\Omega f T_k(u)\varphi\, dx.
	\end{equation*}	
Hence, letting $k\to +\infty$, 	
		\begin{equation*}
		\int_\Omega |D u|\varphi + \int_\Omega u \z\cdot \nabla \varphi \, dx \le \int_\Omega f u\varphi\, dx.
	\end{equation*}	
	Now, recalling that $-\Div \z = f$ one has that
		\begin{equation*}
		\int_\Omega |D u|\varphi \le - \int_\Omega u \z\cdot \varphi \, dx  -\int_\Omega \Div \z u\varphi\, dx = \int_\Omega (\z, D u)\varphi.
	\end{equation*}
	 This concludes the proof being the reverse inequality trivial  since $||\z||_{\infty}\le 1$.
\end{proof}

\begin{proof}[Proof of Theorem \ref{existence}]
	Let $u_p$ be a solution to \eqref{pbmain}. Then it follows from Lemma \ref{teorema1} that there exist $u\in BV(\Omega)$ and $\z\in X(\Omega)$ with $||\z||_{\infty}\le 1$ such that \eqref{def_distrp=1} and \eqref{def_bordop=1} hold. Moreover Lemmas \ref{lemmaidentificaz} and \ref{lemmabordo} give that \eqref{def_zp=1} and \eqref{def_bordo2p=1} hold respectively. This concludes the proof.
\end{proof}

\subsection{The case $\de\Omega$ Lipschitz}

In the previous subsection we required that $\Omega$ has $C^1$ boundary. This fact is due to the application in Lemma \ref{lemmabordo} of Modica's semicontinuity result that needs this hypothesis. Nevertheless, as Modica himself points out, certain functionals are lower semicontinuous with respect to the $L^1$-convergence even when the Lipschitz-continuous setting is considered.

We prove the following result.
\begin{lemma}\label{low-sem1}
Let $H\>:\> BV(\Omega)\to\R$ be a functional defined as
\[H(u)=\int_\Omega|Du|+\int_{\partial\Omega}\psi(x)|u|\, d\mathcal H^{N-1}\]
where $\psi\in L^\infty(\partial\Omega)$ satisfies $0\le \psi\le1$.

Then $H$ is lower semicontinuous with respect to the $L^1$-convergence.
\end{lemma}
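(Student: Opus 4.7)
My plan is a duality argument: I would express $H$ as a supremum of $L^1$-continuous linear functionals on $BV(\Omega)$, from which the $L^1$-lower semicontinuity is automatic. Define the admissible set
\[
\mathcal{A}:=\{\z\in L^\infty(\Omega;\R^N):\operatorname{div}\z\in L^\infty(\Omega),\ \|\z\|_\infty\le 1,\ |[\z,\nu]|\le \psi\ \mathcal H^{N-1}\text{-a.e.\ on }\partial\Omega\},
\]
and aim to establish the dual representation
\[
H(u)=\sup_{\z\in\mathcal A}\left(-\int_\Omega u\,\operatorname{div}\z\,dx\right)\quad\text{for every }u\in BV(\Omega).
\]

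The "$\ge$" direction is a direct consequence of Anzellotti's Green formula recalled in Subsection \ref{anzellottisec}: for any $\z\in\mathcal{A}$ and $u\in BV(\Omega)$,
\[
-\int_\Omega u\,\operatorname{div}\z\,dx=\int_\Omega(\z,Du)-\int_{\partial\Omega}u\,[\z,\nu]\,d\mathcal H^{N-1}\le|Du|(\Omega)+\int_{\partial\Omega}\psi|u|\,d\mathcal H^{N-1}=H(u),
\]
thanks to $\|\z\|_\infty\le 1$ and $|[\z,\nu]|\le\psi$.

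The "$\le$" direction is the technical heart. Given $\varepsilon>0$ and $u\in BV(\Omega)$, I would construct $\z\in\mathcal A$ with $-\int_\Omega u\,\operatorname{div}\z\,dx\ge H(u)-\varepsilon$ as follows. First reduce to $u\in C^\infty(\bar\Omega)$ via strict approximation: under strict convergence, both $\int|Du_k|\to\int|Du|$ and traces converge in $L^1(\partial\Omega)$, so $H(u_k)\to H(u)$. For smooth $u$ and (after a further mollification) smooth $\psi$, take $\z$ close to $\nabla u/|\nabla u|$ in the interior and blend it, on a collar $\{0<\operatorname{dist}(\cdot,\partial\Omega)<\delta\}$, into $-\psi\,\sg(u)\,\nu_{\mathrm{ext}}$, with $\nu_{\mathrm{ext}}$ a smooth extension of $\nu$. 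Then $[\z,\nu]=-\psi\,\sg(u)$, so $-\int u\,[\z,\nu]\,d\mathcal H^{N-1}=\int\psi|u|\,d\mathcal H^{N-1}$, while the defect of $(\z,Du)$ from $|\nabla u|$ on the collar is bounded by $2\int_{\text{collar}}|\nabla u|\,dx\to 0$ as $\delta\to 0^+$ by absolute continuity. Once the duality is proved, each functional $u\mapsto-\int u\,\operatorname{div}\z\,dx$ is $L^1$-continuous on $BV(\Omega)$ (since $\operatorname{div}\z\in L^\infty(\Omega)$), and $H$ is $L^1$-lower semicontinuous as a supremum of such.

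The main obstacle is implementing the collar construction on a merely Lipschitz boundary with an $L^\infty$ weight $\psi$: it requires local bi-Lipschitz straightening of $\partial\Omega$, mollification of $\psi$ in order to keep $\operatorname{div}\z$ bounded, and patching via a partition of unity, all while preserving the constraints $\|\z\|_\infty\le 1$ and $|[\z,\nu]|\le\psi$ (the latter in the limit after the mollification is removed).
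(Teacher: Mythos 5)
Your overall strategy---writing $H$ as a supremum of functionals that are continuous for the $L^1$-topology---is the same idea that drives the paper's proof, but the way you propose to establish the attainment (``$\le$'') half of the dual representation has a genuine gap, and it is precisely the half where all the work lies. First, the reduction to smooth $u$ does not close: set $\tilde H(u)=\sup_{\z\in\mathcal A}\left(-\int_\Omega u\,\Div\z\,dx\right)$. Since each member of the family is $L^1$-continuous, $\tilde H$ is lower semicontinuous, so from $u_k\to u$ strictly, $u_k$ smooth, $\tilde H(u_k)=H(u_k)$ and $H(u_k)\to H(u)$ you only get $\tilde H(u)\le\liminf_k\tilde H(u_k)=H(u)$ --- which is the inequality you already have from Green's formula. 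To conclude $\tilde H(u)\ge H(u)$ you need a near-optimal $\z$ for the given $BV$ function $u$ itself, and your collar construction requires $u$ smooth. Transferring a field $\z$ built for a smooth approximant $v$ back to $u$ costs $\|\Div\z\|_\infty\|u-v\|_{L^1}$, and this product is not small: the regularization $\nabla v/\sqrt{|\nabla v|^2+\epsilon^2}$ has divergence blowing up as $v$ approaches a genuinely non-Sobolev $u$, so the error term cannot be beaten by choosing $v$ closer. Second, even for smooth $u$ the constraints $\Div\z\in L^\infty$, $\|\z\|_\infty\le1$ and $|[\z,\nu]|\le\psi$ with $\psi$ merely in $L^\infty(\partial\Omega)$ on a Lipschitz boundary are exactly what you flag as ``the main obstacle'' and leave unresolved; as stated, the proof is not complete there.

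The paper avoids constructing any near-optimal vector field. It extends $\psi$ to a $[0,1]$-valued function $\varphi$ (equal to some $\phi_1\in C^1(\Omega)\cap W^{1,1}(\Omega)$ inside $\Omega$), extends $u$ by zero to a larger set $\Omega'$, and considers the linear map $L(F)=\int_{\Omega'}u\,\Div(\varphi F)\,dx$ on $C^1_0(\Omega')^N$. Green's formula bounds $\|L\|$ by $\int_\Omega\phi_1|Du|+\int_{\partial\Omega}\psi|u|\,d\mathcal H^{N-1}$, and the Riesz representation theorem then produces a vector measure representing $L$ which is identified explicitly as $-\phi_1\,Du$ in $\Omega$ plus $\psi u\,\nu\,\mathcal H^{N-1}$ on $\partial\Omega$; since these two pieces are mutually singular, the total variation of the representing measure, i.e.\ $\sup_{\|F\|_\infty\le1}L(F)$, \emph{equals} the weighted quantity for every $u\in BV(\Omega)$ at once. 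Lower semicontinuity of $u\mapsto\int_\Omega\phi_1|Du|+\int_{\partial\Omega}\psi|u|\,d\mathcal H^{N-1}$ follows, and $H$ is this functional plus the lower semicontinuous term $\int_\Omega(1-\phi_1)|Du|$. If you want to salvage your approach, you should replace the collar construction by this Riesz-representation step (or prove the dual formula directly for all $u\in BV(\Omega)$ by some other means); as it stands the argument does not prove the lemma.
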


\begin{proof}
We first choose an open bounded set $\Omega'$ containing $\overline\Omega$.
Given $\psi\in L^\infty(\partial\Omega)$, we may find $\phi_1\in C^1(\Omega)\cap W^{1,1}(\Omega)$ such that $\phi_1\big|_{\partial\Omega}=\psi$ and $0\le\phi_1\le1$. We may also consider $\phi_2\in C^1(\Omega'\backslash \overline\Omega)\cap W^{1,1}(\Omega'\backslash \overline\Omega)$ such that $\phi_2\big|_{\partial\Omega}=\psi$ and $0\le\phi_2\le1$. Finally define the following continuous extension of $\psi$:
\begin{equation*}\label{exten}
\varphi(x)=\left\{\begin{array}{ll}
\phi_1(x) &\hbox{ if }x\in\Omega\\[2mm]
\phi_2(x) &\hbox{ if }x\in\Omega'\backslash \overline\Omega\,.
\end{array}\right.
\end{equation*}

We next claim that each  $u\in BV(\Omega)$ satisfies
\begin{equation*}\label{iden}
  \int_{\Omega} \phi_1|Du|+\int_{\partial\Omega}\psi|u|\, d\mathcal H^{N-1} 
  =\sup\left\{\int_{\Omega'} u\, \Div(\varphi F)\, dx\>:\>F\in C_0^1(\Omega')^N\ \ \|F\|_\infty\le1\right\}\,,
\end{equation*}
where $u$ is extended to  $BV(\Omega')$ by defining $u=0$ in $\Omega'\backslash \overline\Omega$.

An inequality is obvious since Green's formula implies
\begin{equation*}
	\begin{aligned}
\int_{\Omega'} u\, \Div(\varphi F)\, dx &=\int_{\Omega} u\, \Div(\phi_1 F)\, dx\\
&=-\int_{\Omega} \phi_1 F\cdot Du+\int_{\partial\Omega}\psi u [F, \nu]\, d\mathcal H^{N-1}\\
&\le \int_{\Omega} \phi_1|Du|+\int_{\partial\Omega}\psi|u|\, d\mathcal H^{N-1}
	\end{aligned}
\end{equation*}
holds for all $F\in C_0^1(\Omega')^N$ such that $\|F\|_\infty\le1$.

To check the reverse inequality, we consider in $C_0^1(\Omega')^N$ the linear map given by
\[
L(F)=\int_{\Omega'} u\, \Div(\varphi F)\, dx\,.
\]

Notice that
\begin{equation*}
	\begin{aligned}
|L(F)|=\left|\int_{\Omega'} u\, \Div(\varphi F)\, dx\right|=\left|-\int_{\Omega} \phi_1 F\cdot Du+\int_{\partial\Omega}\psi u [F, \nu]\, d\mathcal H^{N-1}\right|\\
\le \|F\|_\infty\left[\int_{\Omega} \phi_1|Du|+\int_{\partial\Omega}\psi|u|\, d\mathcal H^{N-1}\right]\,.
	\end{aligned}
\end{equation*}
From this inequality we deduce that $L$ can be extended by density to a linear and continuous map in $C_0(\Omega')^N$ whose norm satisfies
\[
\|L\|\le \int_{\Omega} \phi_1|Du|+\int_{\partial\Omega}\psi|u|\, d\mathcal H^{N-1}\,.
\]
Applying the Riesz representation Theorem, there exists a Radon measure $\mu$ on $\Omega'$ such that $L(F)=\int_{\Omega'} F\cdot\mu$ for every $F \in C_0(\Omega')^N$ and its total variation is $\int_{\Omega'}|\mu|=\|L\|$. Thus,
\[
\int_{\Omega'} F\cdot \mu=L(F)=\int_{\Omega'} u\, \Div(\varphi F)\, dx=-\int_\Omega \phi_1 F\cdot Du+\int_{\partial\Omega}u\psi [F, \nu]\, d\mathcal H^{N-1}
\]
for all $F\in C_0^1(\Omega')^N$.  We deduce that
\[
\int_{\Omega} \phi_1|Du|+\int_{\partial\Omega}\psi|u|\, d\mathcal H^{N-1}=\int_{\Omega'} |\mu|=\sup\left\{L( F)\>:\> F\in C_0(\Omega')^N\ \ \|F\|_\infty\le1\right\}\,.
\]
By density, we conclude that
\[\int_{\Omega} \phi_1|Du|+\int_{\partial\Omega}\psi|u|\, d\mathcal H^{N-1}=\sup\left\{L( F)\>:\> F\in C_0^1(\Omega')^N\ \ \|F\|_\infty\le1\right\}\]
and the claim is proven.

As a straightforward consequence the functional
\[u\mapsto \int_{\Omega} \phi_1|Du|+\int_{\partial\Omega}\psi|u|\, d\mathcal H^{N-1}\]
is lower semicontinuous with respect to the $L^1$--convergence. Therefore,
\[H(u)=\int_{\Omega} (1-\phi_1)|Du|+\int_{\Omega} \phi_1|Du|+\int_{\partial\Omega}\psi|u|\, d\mathcal H^{N-1}\]
is the sum of two lower semicontinuous functionals, so that Lemma is proven.
\end{proof}

The previous lemma can be applied to the functional
\begin{equation}\label{low-sem0}
I(u)=\int_\Omega |D u| + \int_{\partial\Omega}T_1(\lambda\sg(u)-g) u\,  d\mathcal H^{N-1},
\quad u\in BV(\Omega)\,,
\end{equation}
as shown in Proposition \ref{low-sem2} below. Here we only have to take into account the inequalities $|a^+-b^+|\le |a-b|$ and $|a^--b^-|\le |a-b|$, which hold for all real numbers. 

\begin{proposition}\label{low-sem2}
The functional $I$ defined in \eqref{low-sem0} is lower semicontinuous with respect to the $L^1$--convergence when $|g|\le \lambda$.
\end{proposition}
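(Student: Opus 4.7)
The plan is to decompose $I$ into two functionals of the type treated by Lemma~\ref{low-sem1}, via the splitting $u=u^{+}-u^{-}$, and to conclude by using the $L^{1}$-continuity of the maps $u\mapsto u^{\pm}$.

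First, I rewrite the boundary integrand under the assumption $|g|\le\lambda$. This assumption gives $\lambda-g\ge0$ and $\lambda+g\ge0$ on $\partial\Omega$, so the two functions
$$\psi_{1}:=T_{1}(\lambda-g),\qquad \psi_{2}:=T_{1}(\lambda+g)$$
both lie in $L^{\infty}(\partial\Omega)$ with values in $[0,1]$. Since $\sg(u)=\pm1$ on $\{\pm u>0\}$, $T_{1}(-s)=-T_{1}(s)$, and the integrand trivially vanishes on $\{u=0\}$, a short case check (positive/negative/zero trace) shows that
$$T_{1}(\lambda\sg(u)-g)\,u=\psi_{1}u^{+}+\psi_{2}u^{-}\quad\mathcal{H}^{N-1}\text{-a.e.\ on }\partial\Omega.$$

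Next, I use the classical BV identity $|Du|=|Du^{+}|+|Du^{-}|$ as Radon measures on $\Omega$, together with the fact that the trace commutes with positive and negative parts, $(u^{\pm})\big|_{\partial\Omega}=(u\big|_{\partial\Omega})^{\pm}$. Combined with the previous step, this rewrites $I$ as
$$I(u)=H_{1}(u^{+})+H_{2}(u^{-}),\qquad H_{i}(v):=\int_{\Omega}|Dv|+\int_{\partial\Omega}\psi_{i}|v|\,d\mathcal{H}^{N-1},$$
where the absolute value on the boundary is free since $u^{\pm}\ge0$. Each $H_{i}$ is exactly of the form in Lemma~\ref{low-sem1} (with weight $\psi_{i}$ satisfying $0\le\psi_{i}\le 1$) and is therefore $L^{1}$-lower semicontinuous on $BV(\Omega)$.

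Finally, the elementary pointwise inequalities $|a^{+}-b^{+}|\le|a-b|$ and $|a^{-}-b^{-}|\le|a-b|$ make the maps $u\mapsto u^{\pm}$ $L^{1}$-Lipschitz continuous, so if $u_{n}\to u$ in $L^{1}(\Omega)$ then $u_{n}^{\pm}\to u^{\pm}$ in $L^{1}(\Omega)$. Consequently each composed functional $u\mapsto H_{i}(u^{\pm})$ is lsc, and by super-additivity of $\liminf$ one gets $I(u)\le\liminf_{n}I(u_{n})$, which is the desired conclusion. The only nontrivial ingredient is the measure identity $|Du|=|Du^{+}|+|Du^{-}|$, which follows from the BV chain rule on the absolutely continuous part (using $\nabla u=0$ a.e.\ on $\{u=0\}$), from the pointwise relation $|a-b|=|a^{+}-b^{+}|+|a^{-}-b^{-}|$ on the jump set, and from the disjoint-support structure of $u^{\pm}$ on the Cantor part; I expect this to be the main technical point of the proof rather than the semicontinuity step itself.
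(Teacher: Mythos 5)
Your proof is correct and follows essentially the same route as the paper: both split $I(u)=I(u^{+})+I(-u^{-})$, observe that $|g|\le\lambda$ makes the boundary weights $T_{1}(\lambda\mp g)$ lie in $[0,1]$ so that Lemma~\ref{low-sem1} applies to each piece, and conclude via the $L^{1}$-continuity of $u\mapsto u^{\pm}$ coming from $|a^{\pm}-b^{\pm}|\le|a-b|$. The only difference is that you make explicit the measure identity $|Du|=|Du^{+}|+|Du^{-}|$ (which the paper uses implicitly in writing $I=I_{1}+I_{2}$), and your sketch of it is sound.
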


\begin{proof}
First write $I=I_1+I_2$, where
\[I_1(u)=I(u^+)=\int_\Omega |D u^+|\, dx + \int_{\partial\Omega}T_1(\lambda-g) u^+\,  d\mathcal H^{N-1}\]
and
\[I_2(u)=I(-u^-)=\int_\Omega |D u^-|\, dx + \int_{\partial\Omega}T_1(\lambda+g) u^-\,  d\mathcal H^{N-1}\]

Take a sequence $u_n$ in $BV(\Omega)$ that converges to $u$ strongly in $L^1(\Omega)$. Then
$u_n^+$ converges to $u^+$ and $u_n^-$ converges to $u^-$ as $n\to \infty$, so that Lemma \ref{low-sem1} implies that
\[I_1(u)\le \liminf_{n\to\infty}I_1(u_n)\]
and
\[I_2(u)\le \liminf_{n\to\infty}I_2(u_n)\,.\]
Therefore, its sum $I$ is lower semicontinuous.
\end{proof}

\begin{theorem}
Theorem \ref{existence} holds even if $\Omega$ has Lipschitz boundary in case $|g|\le \lambda$.
\end{theorem}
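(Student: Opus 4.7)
The plan is to reproduce the proof of Theorem \ref{existence} step by step, pinpoint the single place where the $C^1$ regularity of $\partial\Omega$ intervened, and replace that step with Proposition \ref{low-sem2}. Starting from the family $u_p$ of solutions to \eqref{pbmain}, I would apply Lemma \ref{lemmastimafond} to obtain a BV bound and a limit $u$, and Lemma \ref{teorema1} to produce the vector field $\z$ and the boundary function $\beta$ together with \eqref{def_distrp=1} and \eqref{def_bordop=1}. These constructions rest on the Anzellotti pairing and the weak normal trace, which are fully available for Lipschitz domains (Section \ref{anzellottisec}), so they transfer verbatim. Likewise, the identification $(\z,Du)=|Du|$ of Lemma \ref{lemmaidentificaz} is a purely interior argument using test functions $\varphi\in C_c^1(\Omega)$, and is therefore unaffected by the boundary regularity.

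The critical point is the boundary identification \eqref{def_bordo2p=1} carried out in Lemma \ref{lemmabordo}. Its proof hinges on the lower semicontinuity, as $p\to 1^+$, of the functional
\begin{equation*}
	u\longmapsto \int_\Omega |\nabla T_k(u)|\,dx + \int_{\partial\Omega}T_1(\lambda\,\sg(u)-g)\,T_k(u)\,d\mathcal H^{N-1},
\end{equation*}
which was previously obtained by invoking Modica's result (requiring $C^1$ boundary). In the Lipschitz setting, I would instead observe that, since $\sg(T_k(u))=\sg(u)$ wherever $u\neq 0$, this functional coincides with $I(T_k(u))$, where $I$ is the one in \eqref{low-sem0}. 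Because $T_k$ is $1$-Lipschitz, $T_k(u_p)\to T_k(u)$ in $L^1(\Omega)$, so Proposition \ref{low-sem2}---whose hypothesis $|g|\le\lambda$ is precisely the extra assumption here---delivers exactly the semicontinuity needed.

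Once this replacement is in place, the remainder of Lemma \ref{lemmabordo} goes through unchanged: one passes to the limit in the test-function inequality obtained from $T_k(u_p)$, uses $-\Div\z=f$ together with the Anzellotti Green formula, and runs the sign analysis of $T_1(\lambda\,\sg(u)-g)+[\z,\nu]$, which only requires $|g|\le \lambda+1$ and is therefore compatible with the stronger hypothesis $|g|\le\lambda$. The resulting identity, combined with $[\z,\nu]+\lambda\beta=g$ from Lemma \ref{teorema1}, yields \eqref{def_bordo2p=1}. The main conceptual obstacle---the boundary lower semicontinuity on merely Lipschitz domains---has already been isolated and resolved in Proposition \ref{low-sem2}, so no further technical input is required beyond invoking it in place of Modica's theorem.
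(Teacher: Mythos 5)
Your proof is correct and follows exactly the paper's argument: the published proof consists precisely of the observation that Proposition \ref{low-sem2} is substituted for Modica's semicontinuity result in Lemma \ref{lemmabordo}, with every other step unaffected by the boundary regularity. Your additional verifications --- that the left-hand side of the key inequality is $I(T_k(\cdot))$ because $\sg(T_k(u))=\sg(u)$ wherever $T_k(u)\neq 0$, and that $T_k(u_p)\to T_k(u)$ in $L^1(\Omega)$ --- supply exactly the details the paper leaves implicit.
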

\begin{proof}
	The only difference with respect to the proof of Theorem \ref{existence} is the use of Proposition \ref{low-sem2} in place of Proposition $1.2$ of \cite{modica}.
\end{proof}

\section{Remarks and examples}
\label{sec:examples}

\subsection{The case with $\lambda \in L^1(\partial\Omega)$}
\label{secL1}

Here we briefly spend a few words for the case of a nonnegative $\lambda \in L^1(\partial\Omega)$. 

\medskip

Indeed, let us stress that, even for $\lambda \in L^1(\partial\Omega)$, the quotient which appears in $M$ is well defined. Nevertheless, now the supremum is taken over all  $u\in W^{1,1}(\Omega)\cap L^1(\partial\Omega, \lambda)\backslash\{0\}$

\medskip

Then if one considers the following approximation scheme
\begin{equation}
	\label{pbL1}
	\begin{cases}
		\displaystyle -\Delta_p u_p = f & \text{ in }\Omega,\\
		\displaystyle |\nabla u_p|^{p-2}\nabla u_p\cdot \nu +\lambda |u_p|^{p-2}u_p = g& \text{ on } \partial\Omega,
	\end{cases}
\end{equation}
where $f\in L^{N,\infty}(\Omega)$, $g\in L^\infty(\partial\Omega)$ and $0\le \lambda \in L^1(\partial\Omega)$ but not identically null, the existence of $u_p\in W^{1,p}(\Omega) \cap L^p(\partial \Omega, \lambda)$ satisfying \eqref{pbL1} follows from the minimization of the following functional
\[
Q(u)=\frac{1}{p}\int_{\Omega}\left|\nabla u\right|^{p}dx +\int_{\partial\Omega}\frac{\lambda}{p}\left|u\right|^{p} \ d\mathcal H^{N-1} - \int_{\partial\Omega}gu \ d\mathcal H^{N-1} - \int_{\Omega}fu \ dx.
\]
Indeed, we consider the space $W^{1,p}(\Omega) \cap L^p(\partial \Omega, \lambda)$ endowed with the norm defined as $\displaystyle \|u\|^p_{p,\lambda}=\int_\Omega|\nabla u|^pdx+\int_{\partial\Omega}\lambda |u|^p\, d\mathcal H^{N-1}$. We remark that  $\|u\|_{p,\lambda}$ is not anymore an equivalent norm to the $W^{1,p}$--norm. By the way one can convince himself that it always holds the inequality
$$\|u\|_{p,\lambda} \ge C\|u\|_{W^{1,p}(\Omega)},$$
which allows to deduce all continuous and compact embeddings which holds for $W^{1,p}(\Omega)$. Since $Q$ can be written as
\[Q(u)=\frac1p\|u\|_{p,\lambda}^p - \int_{\partial\Omega}gu \ d\mathcal H^{N-1} - \int_{\Omega}fu \ dx\,,\]
it follows that these embeddings lead to coercivity. We also deduce from these embeddings that $Q$ is weakly lower semicontinuous. Standard results then yield the desired minimizer.

Any minimizer $u_p$ of the previous functional satisfies that
$$\displaystyle \int_{\Omega}\left|\nabla u_p\right|^{p-2}\nabla u_p\cdot \nabla \varphi \ dx +\int_{\partial\Omega}\lambda\left|u_p\right|^{p-2}u_p\varphi \ d\mathcal H^{N-1} = \int_{\Omega}f\varphi \ dx +  \int_{\partial\Omega}g\varphi \ d\mathcal H^{N-1},$$
where $\varphi \in W^{1,p}(\Omega) \cap L^p(\partial \Omega, \lambda)$.
Therefore $u_p$ itself can be taken as a test function.
Now similar estimates for $\|u_p\|_{\lambda}$ can be obtained. Notice that  $\|u_p\|_{\lambda} \ge C\|u_p\|_{BV(\Omega)}$ but they are not equivalent. With this approach in mind one can show that the results of both Sections \ref{sec_behaviour} and \ref{sec:limitproblem} still hold if $0\le \lambda \in L^1(\Omega)$ (but not identically null) with natural modifications.

\subsection{The radial case}
\label{radialSec}
Here we deal with the case $\Omega$ as a ball of radius $R$ centered at the origin, namely:
\[
\Omega = B_R := \{x\in \mathbb{R}^N: |x|<R\}.
\]
Hence let us consider the following problem
\[
\begin{cases}
	\displaystyle -\Delta_p u_p = \frac{A}{\left|x\right|} &\text{in} \ \Omega,\\
	|\nabla u_p|^{p-2}\nabla u_p \cdot \nu + \lambda u^{p-1}_p=\gamma&\text{on}\ \partial\Omega\,,
\end{cases}
\]
where $A,\lambda$ and $\gamma$ are positive constants, and we are firstly interested in the asymptotic behaviour of $u_p$ as $p\to 1^+$. We explicitly observe that the datum $A/\left|x\right|\in L^{N,\infty}(\Omega)$.

Hence we look for a function $u_p(r)$ ($r=|x|$) satisfying
\[
-\frac{1}{r^{N-1}}\left(r^{N-1} |u_p'(r)|^{p-2} u_p'(r)\right)^{'} = \frac{A}{r},
\]
which gives
\begin{equation*}
	[r^{N-1}(-u_p'(r))^{p-1}]' = \frac{A}{r^{2-N}}	
\end{equation*}
and
\begin{equation}\label{rad1}
-u_{p}'(r)=\left(\frac{A}{N-1}\right)^{\frac{1}{p-1}}.
\end{equation}
Now integrating between $r$ and $R$ (with an abuse of notation) one has
\[
u_p(r) = u_p(R) + \left(\frac{A}{N-1}\right)^{\frac{1}{p-1}}\left(R- r\right),
\]
and since it follows from the boundary condition and from \eqref{rad1} that
\[
u_p(R)=\left\{\frac{1}{\lambda}\left[\frac{A}{N-1}+\gamma\right]\right\}^{\frac{1}{p-1}}
\]
then one also has
\begin{equation*}
	u_p(r) = \left\{\frac{1}{\lambda}\left[\frac{A}{N-1}+\gamma\right]\right\}^{\frac{1}{p-1}}
	+
	\left(\frac{A}{N-1}\right)^{\frac{1}{p-1}}\left(R- r\right).
\end{equation*}
Let us underline that:
\begin{enumerate}
	\item if $A>N-1$, then $u_{p}\to+\infty$ in $\Omega$;
	\item if $A=N-1$, then
	\begin{enumerate}
		\item if $\lambda<1+\gamma$, then $u_{p}\to +\infty$;
		\item if $\lambda=1+\gamma$, then $u_{p}\to 1+(R-r)$;
		\item if $\lambda>1+\gamma$, then $u_{p}\to R-r$;
	\end{enumerate}
	\item if $A<N-1$, then
	\begin{enumerate}
		\item if $\lambda < \frac{A}{N-1}+\gamma$, then $u_{p}\to +\infty$ in $\bar \Omega$;
		\item if $\lambda=\frac{A}{N-1}+\gamma$, then $u_{p}\to 1$;
		\item if $\lambda >\frac{A}{N-1}+\gamma$, then $u_{p}\to 0$.
	\end{enumerate}	
\end{enumerate}

\begin{remark}
Let $\Omega=B_{R}$ and $A,\gamma \ge 0$. A posteriori from Lemma \ref{lemmastimafond} and \ref{lemblowup}, last example assures what follows.
\begin{itemize}[leftmargin=*]
\item[$\textcolor{gray}{\bullet}$] In the cases: $A>N-1$; $A=N-1$ and $\lambda<1+\gamma$; $(N-1)(\lambda-\gamma) <A<N-1$, then
\[
M(A/\left|x\right|,\gamma,\lambda) >1.
\]
\item[$\textcolor{gray}{\bullet}$] In the cases: $A=N-1$ and $\lambda\ge 1+\gamma$; $A=(\lambda-\gamma)(N-1)<N-1$, then
\[
M(A/\left|x\right|,\gamma,\lambda) =1.
\]
\item[$\textcolor{gray}{\bullet}$] In the case $A<\min\{N-1,(\lambda-\gamma)(N-1)\}$ then
\[
M(A/\left|x\right|,\gamma,\lambda) <1.
\]
\end{itemize}

We conclude that
\[M(A/\left|x\right|,\gamma,\lambda)=\max\left\{\frac{A}{N-1}, \frac1\lambda\left[\frac{A}{N-1}+\gamma\right]\right\}\]
\end{remark}

\subsection{A variational approach}
\label{sec:variational}
In the case $\lambda(x)=\lambda$ positive constant, and $g\equiv 0$, the argument used in \cite{dpnot}
allows to prove that the functional
	\[
	J_{p}(u)=\frac{1}{p}\int_{\Omega}\left|\nabla u\right|^{p} \ dx +\frac{\lambda}{p}\int_{\partial\Omega}\left|u\right|^{p} \ d\mathcal H^{N-1} -\int_{\Omega}fu \ dx
	\]
$\Gamma$-converges in $BV(\Omega)$ to
\[
J(u)=\int_\Omega|D u|+\min\{\lambda,1\}\int_{\de\Omega}|u| \ d\mathcal H^{N-1}-\int_{\Omega}fu \ dx.
\]
Let us observe that the minimizers of $J_{p}$ in $W^{1,p}(\Omega)$ are solutions to \eqref{pbmain}. Formally, \eqref{pb1} is the Euler-Lagrange equation related to $J$. Then, if $M(f,0,\lambda)\le 1$ it follows that the minimizers of $J_{p}$ converge (in $BV(\Omega)$) to a minimizer of $J$.

Actually, the truncation appearing in the boundary datum seems to be natural. If one considers $\lambda>1$ and the functional
\[
\tilde J(u)=\int_\Omega|D u|+\lambda\int_{\de\Omega}|u| \ d\mathcal H^{N-1}-\int_{\Omega}fu \ dx,
\]
it is easy to convince that
\begin{equation}
\label{JJtilde}
\min_{u\in BV(\Omega)} \tilde J(u)= \min_{u\in BV(\Omega)} J(u)\,.
\end{equation}
Indeed, if $v$ is a minimum for $J$, Theorem $3.1$ of \cite{ls} assures the existence of a sequence $v_k\in C^\infty_c(\Omega)$ which converges to $v$ in  $L^q(\Omega)$ for any $q\le \frac{N}{N-1}$ and such that $\int_\Omega|\nabla v_k|\, dx$ converges to $\int_{\R^N} |Dv|$ as $k\to \infty$. Hence $\tilde J(v_k)=J(v_k)$ for all $k>0$ and $\min_{u\in BV(\Omega)} \tilde J(u)\le \lim_{k\to +\infty} J(v_{k})= \min_{u\in BV(\Omega)} J(u)$.
Being the reverse inequality trivial, it holds \eqref{JJtilde}.

\subsection{A sharp estimate on $M(f,g,\lambda)$}
	Let $f\equiv 1$, $g\equiv 0$, $\lambda \ge 0$ and let $\Omega$ be a Lipschitz bounded domain. Then, by pointing out the dependence of $M$ by $\Omega$,
	\[
	M(1,0,\lambda)= M(1,0,\lambda,\Omega)= \sup_{u\in W^{1,1}(\Omega)\setminus\{0\}}\left\{\frac{\displaystyle\int_{\Omega}\left|u\right|dx}{\displaystyle\int_{\Omega}\left|\nabla u\right|dx+\lambda\int_{\partial\Omega}\left|u\right|d\mathcal H^{N-1}}\right\}.
	\]
	In this case we denote by $\Lambda(\Omega,\lambda)=\frac{1}{M(1,0,\lambda,\Omega)}$, and the value $\Lambda(\Omega,\lambda)$ is the limit, as $p\to 1$, of the first Robin $p$-Laplace eigenvalue (see \cite{dpnot}).
	It has been proved in \cite{dpnot} that when $\lambda>0$ and $\Omega$ is a Lipschitz bounded domain, then $\Lambda(\Omega,\lambda)\in ]0,+\infty[$ and
	\begin{equation}
		\label{dpnot}
		\Lambda(\Omega,\lambda)\ge \min\{\lambda,1\}\frac{N}{R},
	\end{equation}
	where $R$ is the radius of the ball having the same volume than $\Omega$. Moreover, for any $\lambda \ge 0$, inequality \eqref{dpnot} is an equality when $\Omega$ is a ball. Then \eqref{dpnot} gives an explicit upper bound for $M(1,0,\lambda,\Omega)$, and then an explicit condition in order to obtain that the solutions $u_{p}$ of \eqref{pbmain}, for this particular choice of the coefficients, go to zero in $\Omega$ as $p\to 1$.

\appendix
\section{Some auxiliary lemmas}


For the convenience of the reader, here we consider some technical lemmas used throughout the paper.

\begin{proposition}\label{holder}
	Let $1<p, p'<\infty$ be such that $\displaystyle \frac1{p}+\frac1{p'}=1$.
	Assume that $f_1, f_2\>:\>\Omega\to\mathbb R$ and $g_1, g_2\>:\>\partial\Omega\to\mathbb R$ are measurable functions satisfying
	$f_1\in L^p(\Omega)$, $f_2\in L^{p'}(\Omega)$, $g_1\in L^p(\partial\Omega, \lambda)$ and $g_2\in L^{p'}(\partial\Omega, \lambda)$. Then $f_1f_2\in L^1(\Omega)$, $g_1g_2\in L^1(\partial\Omega, \lambda)$ and
	\begin{multline*}
		\int_\Omega|f_1f_2|\, dx+\int_{\partial\Omega}\lambda(x)|g_1g_2|\, d\mathcal H^{N-1}\\
		\le \left[\int_\Omega|f_1|^p\, dx+\int_{\partial\Omega}\lambda(x)|g_1|^p\, d\mathcal H^{N-1}\right]^{\frac1p}\left[\int_\Omega|f_2|^{p'}\, dx+\int_{\partial\Omega}\lambda(x)|g_2|^{p'}\, d\mathcal H^{N-1}\right]^{\frac1{p'}}.
	\end{multline*}
\end{proposition}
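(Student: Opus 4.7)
The plan is to reduce this two-part inequality to a single application of the classical Hölder inequality by regarding the union $\Omega \sqcup \partial\Omega$ as one measure space, or equivalently to combine two classical Hölder inequalities with the discrete Hölder inequality on $\mathbb{R}^{2}$.

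First I would introduce on the disjoint union $\Omega \sqcup \partial\Omega$ the measure $\mu$ defined by $\mu\llcorner\Omega = dx$ and $\mu\llcorner\partial\Omega = \lambda\,d\mathcal H^{N-1}$, and then set
\[
F(x) = \begin{cases} f_1(x) & x\in\Omega\\ g_1(x) & x\in\partial\Omega\end{cases}, \qquad G(x) = \begin{cases} f_2(x) & x\in\Omega\\ g_2(x) & x\in\partial\Omega.\end{cases}
\]
By hypothesis $F\in L^p(\mu)$ and $G\in L^{p'}(\mu)$, since the $L^p(\mu)$-norm of $F$ is exactly the bracketed quantity on the right-hand side raised to the $1/p$ (and similarly for $G$). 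The standard Hölder inequality applied on $(\Omega\sqcup\partial\Omega,\mu)$ then gives directly $FG\in L^1(\mu)$ together with the desired estimate, once one unwinds the definition of $\mu$.

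Alternatively, if one prefers to avoid introducing an abstract measure space, I would proceed in two steps. First, apply the usual Hölder inequality separately on $\Omega$ and on $\partial\Omega$ (with the weighted measure $\lambda\,d\mathcal H^{N-1}$) to obtain
\[
\int_\Omega |f_1f_2|\,dx \le \|f_1\|_{L^p(\Omega)} \|f_2\|_{L^{p'}(\Omega)}, \qquad \int_{\partial\Omega}\lambda|g_1g_2|\,d\mathcal H^{N-1}\le \|g_1\|_{L^p(\partial\Omega,\lambda)}\|g_2\|_{L^{p'}(\partial\Omega,\lambda)}.
\]
Summing these and writing $a=\|f_1\|_{L^p(\Omega)}$, $b=\|g_1\|_{L^p(\partial\Omega,\lambda)}$, $c=\|f_2\|_{L^{p'}(\Omega)}$, $d=\|g_2\|_{L^{p'}(\partial\Omega,\lambda)}$, the claim reduces to the two-dimensional Hölder inequality $ac+bd\le (a^p+b^p)^{1/p}(c^{p'}+d^{p'})^{1/p'}$, which is a standard consequence of Young's inequality applied to the normalized vectors.

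There is no real obstacle here: both routes are essentially bookkeeping once one recognizes that the combined quantity $\int_\Omega|\cdot|^p\,dx+\int_{\partial\Omega}\lambda|\cdot|^p\,d\mathcal H^{N-1}$ is nothing but $\|\cdot\|_{L^p(\mu)}^p$ for the natural measure $\mu$ above. The only mildly delicate point is verifying integrability of $FG$ with respect to $\mu$, but this follows at once from the finiteness of the right-hand side.
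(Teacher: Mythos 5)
Your proposal is correct, and your first route is in substance exactly what the paper does: the paper's proof is nothing but the standard Young-inequality-with-parameter-$\epsilon$ derivation of H\"older's inequality carried out on the combined measure $d\mu = dx\llcorner\Omega + \lambda\,d\mathcal H^{N-1}\llcorner\partial\Omega$ (it bounds the left-hand side by $\frac{\epsilon^p}{p}A^p+\frac{1}{\epsilon^{p'}p'}B^{p'}$ and minimizes over $\epsilon$), so recognizing the bracketed quantities as $\|F\|_{L^p(\mu)}^p$ and $\|G\|_{L^{p'}(\mu)}^{p'}$ and citing abstract H\"older is the same argument with the proof of H\"older outsourced. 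Your second route --- two separate classical H\"older inequalities followed by the discrete inequality $ac+bd\le (a^p+b^p)^{1/p}(c^{p'}+d^{p'})^{1/p'}$ --- is a genuinely different decomposition; it is equally valid and arguably more modular, though it requires the extra (easy) two-dimensional H\"older step, whereas the paper's single pointwise Young inequality handles both pieces at once. The integrability of $f_1f_2$ and $g_1g_2$ follows in either route from the finiteness of the right-hand side, as you note.
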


\begin{proof}
	For every $\epsilon>0$, we apply Young's inequality to get
	\begin{multline*}
		\int_\Omega|f_1f_2|\, dx+\int_{\partial\Omega}\lambda(x)|g_1g_2|\, d\mathcal H^{N-1}
		\le \frac{\epsilon^p}{p}\int_\Omega|f_1|^p\, dx+\frac1{\epsilon^{p'}p'}\int_\Omega|f_2|^{p'}\, dx\\
		+\frac{\epsilon^p}{p}\int_{\partial\Omega}\lambda(x)|g_1|^p\, d\mathcal H^{N-1}+\frac1{\epsilon^{p'}p'}\int_{\partial\Omega}\lambda(x)|g_2|^{p'}\, d\mathcal H^{N-1}\,,
	\end{multline*}
	Denoting
	\[A^p=\int_\Omega|f_1|^p\, dx+\int_{\partial\Omega}\lambda(x)|g_1|^p\, d\mathcal H^{N-1}\]
	and
	\[B^{p'}=\int_\Omega|f_2|^{p'}\, dx+\int_{\partial\Omega}\lambda(x)|g_2|^{p'}\, d\mathcal H^{N-1}\]
	we have obtained that
	\[\int_\Omega|f_1f_2|\, dx+\int_{\partial\Omega}\lambda(x)|g_1g_2|\, d\mathcal H^{N-1}
	\le\frac{\epsilon^p}{p}A^p+\frac1{\epsilon^{p'}p'}B^{p'}\]
	for all $\epsilon>0$. Minimizing in $\epsilon$, it follows that
	\[\int_\Omega|f_1f_2|\, dx+\int_{\partial\Omega}\lambda(x)|g_1g_2|\, d\mathcal H^{N-1}
	\le AB\]
	as desired.
\end{proof}

\begin{proposition}\label{bounded}
	Assume that $f\in L^s(\Omega)$ and $g\in L^s(\partial\Omega, \lambda)$ for all $1\le s<\infty$. If
	\[\int_\Omega|f|^s\, dx+\int_{\partial\Omega}\lambda(x) |g|^s\, d\mathcal H^{N-1}\le C^s\qquad\forall s<\infty\]
	for some constant $C>0$,
	then
	\begin{enumerate}
		\item There exists $\displaystyle \lim_{s\to\infty}\left[\int_\Omega|f|^s\, dx+\int_{\partial\Omega}\lambda(x) |g|^s\, d\mathcal H^{N-1}\right]^{\frac1s}$;
		\item $f\in L^\infty(\Omega)$ and $g\chi_{\{\lambda>0\}}\in L^\infty(\partial\Omega)$;
		\item $\displaystyle \max\{\|f\|_\infty, \|g\chi_{\{\lambda>0\}}\|_\infty\}=\lim_{s\to\infty}\left[\int_\Omega|f|^s\, dx+\int_{\partial\Omega}\lambda(x) |g|^s\, d\mathcal H^{N-1}\right]^{\frac1s}$.
	\end{enumerate}
\end{proposition}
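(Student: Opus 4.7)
The plan is to reduce the proposition to the classical fact that $\|h\|_{L^s}\to\|h\|_{L^\infty}$ as $s\to\infty$ on a finite measure space. To do this, I would combine the two integrals into a single $L^s$-norm by setting $X:=\Omega\sqcup\partial\Omega$ equipped with the finite measure
\[
\mu(E):=|E\cap\Omega|+\int_{E\cap\partial\Omega}\lambda\,d\mathcal H^{N-1},
\]
and defining $h:X\to\mathbb R$ by $h=f$ on $\Omega$ and $h=g$ on $\partial\Omega$. Then
\[
F(s):=\left[\int_\Omega|f|^s\,dx+\int_{\partial\Omega}\lambda|g|^s\,d\mathcal H^{N-1}\right]^{1/s}=\|h\|_{L^s(X,\mu)},
\]
and the hypothesis reads $F(s)\le C$ for every finite $s$.

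Next I would identify the $\mu$-essential supremum of $|h|$. The $\mu$-null subsets of $\Omega$ are exactly the Lebesgue-null sets, while the $\mu$-null subsets of $\partial\Omega$ are the $\mathcal H^{N-1}$-null subsets of $\{\lambda>0\}$ together with the whole set $\{\lambda=0\}$. Consequently
\[
L:=\operatorname{ess\,sup}_{\mu}|h|=\max\{\|f\|_{L^\infty(\Omega)},\,\|g\chi_{\{\lambda>0\}}\|_{L^\infty(\partial\Omega)}\},
\]
which is precisely the quantity appearing in item $(3)$.

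For the actual convergence I would use the standard two-sided estimate. For the lower bound, fixing $L'<L$ and setting $E:=\{|h|>L'\}$ gives $\mu(E)>0$, whence $F(s)\ge L'\mu(E)^{1/s}\to L'$ as $s\to\infty$; letting $L'\nearrow L$ yields $\liminf_{s\to\infty}F(s)\ge L$. Combined with the bound $F(s)\le C$ this already forces $L\le C<\infty$, proving item $(2)$. For the upper bound, since $|h|\le L$ holds $\mu$-almost everywhere and $\mu(X)=|\Omega|+\int_{\partial\Omega}\lambda\,d\mathcal H^{N-1}$ is finite,
\[
F(s)\le L\,\mu(X)^{1/s}\longrightarrow L,
\]
giving $\limsup_{s\to\infty}F(s)\le L$. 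Combining the two bounds yields both the existence of the limit (item $(1)$) and its value (item $(3)$).

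The argument is essentially a routine repackaging of the classical $L^s$-to-$L^\infty$ convergence, so I do not expect a substantial obstacle. The single point that merits care is that the $\mu$-essential supremum on the boundary piece is the supremum of $|g|$ taken over $\{\lambda>0\}$ rather than over all of $\partial\Omega$; this is precisely why the indicator $\chi_{\{\lambda>0\}}$ appears in the statement.
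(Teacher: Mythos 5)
Your argument is correct. The substance coincides with the paper's proof: the identification of the limit is obtained there, exactly as in your proposal, by showing that each superlevel set $\{|f|>\Gamma+\epsilon\}$ (resp.\ $\{|g|>\Gamma+\epsilon\}\cap\{\lambda>0\}$) must be null for the relevant measure, and the upper bound comes from the trivial estimate against the total mass $|\Omega|+\int_{\partial\Omega}\lambda\,d\mathcal H^{N-1}$. The only organizational difference is in item (1): the paper first proves existence of the limit by showing that the normalized quantity $\bigl[\Lambda^{-1}\bigl(\int_\Omega|f|^s+\int_{\partial\Omega}\lambda|g|^s\bigr)\bigr]^{1/s}$ is increasing in $s$ (a consequence of its combined H\"older inequality, Proposition~\ref{holder}) and bounded, whereas you obtain existence as a byproduct of the sandwich $\liminf F(s)\ge L\ge\limsup F(s)$. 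Your repackaging on the disjoint-union measure space $(X,\mu)$ is a clean way to see that the whole proposition is the classical $L^s\to L^\infty$ convergence on a finite measure space, and your observation that the $\mu$-essential supremum on the boundary piece only sees $\{\lambda>0\}$ is exactly the point the indicator $\chi_{\{\lambda>0\}}$ encodes; neither route has a gap.
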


\begin{proof}
	(1) Let $\Lambda=|\Omega|+\int_{\partial\Omega}\lambda(x)\, d\mathcal H^{N-1}$. Observe that the family
	\[\left[\frac1\Lambda\left(\int_\Omega|f|^s\, dx+\int_{\partial\Omega}\lambda(x) |g|^s\, d\mathcal H^{N-1}\right)\right]^{\frac1s}\]
	is increasing in $s$ as a consequence of Proposition \ref{holder}. On the other hand, it is bounded by
	$\displaystyle \frac{C}{\Lambda^{1/s}}\le C+1$
	for $s$ large enough. Hence, it converges. Denote
	\[\Gamma =\lim_{s\to\infty}\left[\frac1\Lambda\left(\int_\Omega|f|^s\, dx+\int_{\partial\Omega}\lambda(x) |g|^s\, d\mathcal H^{N-1}\right)\right]^{\frac1s}\]
	and notice that it leads to
	\[\lim_{s\to\infty}\left[\int_\Omega|f|^s\, dx+\int_{\partial\Omega}\lambda(x) |g|^s\, d\mathcal H^{N-1}\right]^{\frac1s}=\Gamma.\]
	
	\bigskip
	
	(2) We are proving that $|f(x)|\le \Gamma$ a.e. in $\Omega$. For every $\epsilon>0$, define
	\[A_\epsilon=\{x\in\Omega\>:\> |f(x)|>\Gamma+\epsilon\}.\]
	If $|A_\epsilon|>0$, then
	\[\left[\int_\Omega|f|^s\, dx+\int_{\partial\Omega}\lambda(x) |g|^s\, d\mathcal H^{N-1}\right]^{\frac1s}\ge \left[\int_{A_\epsilon}|f|^s\, dx\right]^{\frac1s}\ge (\Gamma+\epsilon)|A_\epsilon|^{\frac1s}.\]
	Letting $s$ go to $\infty$, we arrive at $\Gamma\ge\Gamma+\epsilon$, which is a contradiction. So $A_\epsilon$ is a null set and consequently
	$|f(x)|\le \Gamma+\epsilon$ a.e. for every $\epsilon>0$, wherewith $|f(x)|\le \Gamma$ a.e.
	
	We next check that $|g(x)|\le \Gamma$ $\mathcal H^{N-1}$--a.e. on $\{\lambda>0\}$ following a similar argument. For every $\epsilon>0$, define
	\[B_\epsilon=\{x\in\partial\Omega\>:\>\lambda(x)>0\,, \quad |g(x)|>\Gamma+\epsilon\}.\]
	If $\int_{B_\epsilon}\lambda\, d\mathcal H^{N-1}>0$, then
	\[\left[\int_\Omega|f|^s\, dx+\int_{\partial\Omega}\lambda(x) |g|^s\, d\mathcal H^{N-1}\right]^{\frac1s}\ge \left[\int_{B_\epsilon}\lambda |g|^s\, d\mathcal H^{N-1}\right]^{\frac1s}\ge (\Gamma+\epsilon)\int_{B_\epsilon}\lambda \, d\mathcal H^{N-1}.\]
	When $s$ goes to $\infty$, this inequality becomes $\Gamma\ge\Gamma+\epsilon$, which is a contradiction. So $\int_{B_\epsilon}\lambda\, d\mathcal H^{N-1}$ vanishes. Hence  $|g(x)|\chi_{\{\lambda>0\}}\le \Gamma+\epsilon$ for all $\epsilon>0$, so that $|g(x)|\chi_{\{\lambda>0\}}\le \Gamma$.

	\bigskip
	
	(3) By the previous point, we already know that $\max\{\|f\|_\infty, \|g\chi_{\{\lambda>0\}}\|_\infty\}\le \Gamma$. The reverse inequality follows from the inequality
	\[\left[\int_\Omega|f|^s\, dx+\int_{\partial\Omega}\lambda(x) |g|^s\, d\mathcal H^{N-1}\right]^{\frac1s}\le \max\{\|f\|_\infty, \|g\chi_{\{\lambda>0\}}\|_\infty\}\Lambda^{\frac1s}\]
	by taking the limit as $s$ tends to $\infty$.
\end{proof}

\section*{Funding}

F. Della Pietra has been partially supported by the  MIUR-PRIN 2017 grant ``Qualitative and quantitative aspects of nonlinear PDE's'', by GNAMPA of INdAM, by  the FRA Project (Compagnia di San Paolo and Universit\`a degli studi di Napoli Federico II) \verb|000022--ALTRI_CDA_75_2021_FRA_PASSARELLI|.

F. Oliva has been partially supported by GNAMPA of INdAM and by PON Ricerca e Innovazione 2014-2020.

S. Segura de Le\'on has been supported  by MCIyU \& FEDER, under project PGC2018--094775--B--I00 and by CECE (Generalitat Valenciana) under project AICO/2021/223.

\end{document}